\let\old@tocline\@tocline
\let\section@tocline\@tocline
\newcommand{\subsection@dotsep}{4.5}
\newcommand{\subsubsection@dotsep}{4.5}
     \leaders\hbox{$\m@th
        \mkern \subsection@dotsep mu\hbox{.}\mkern \subsection@dotsep mu$}\hfill
\let\subsection@tocline\@tocline
\let\@tocline\old@tocline
     \leaders\hbox{$\m@th
        \mkern \subsubsection@dotsep mu\hbox{.}\mkern \subsubsection@dotsep mu$}\hfill
\let\subsubsection@tocline\@tocline
\let\@tocline\old@tocline
\let\old@l@subsection\l@subsection
\let\old@l@subsubsection\l@subsubsection
\def\@tocwriteb#1#2#3{%
  \begingroup
    \@xp\def\csname #2@tocline\endcsname##1##2##3##4##5##6{%
      \ifnum##1>\c@tocdepth
      \else \sbox\z@{##5\let\indentlabel\@tochangmeasure##6}\fi}%
    \csname l@#2\endcsname{#1{\csname#2name\endcsname}{\@secnumber}{}}%
  \endgroup
  \addcontentsline{toc}{#2}%
    {\protect#1{\csname#2name\endcsname}{\@secnumber}{#3}}}%
\newlength{\@tocsectionindent}
\newlength{\@tocsubsectionindent}
\newlength{\@tocsubsubsectionindent}
\newlength{\@tocsectionnumwidth}
\newlength{\@tocsubsectionnumwidth}
\newlength{\@tocsubsubsectionnumwidth}
\newcommand{\settocsectionnumwidth}[1]{\setlength{\@tocsectionnumwidth}{#1}}
\newcommand{\settocsubsectionnumwidth}[1]{\setlength{\@tocsubsectionnumwidth}{#1}}
\newcommand{\settocsubsubsectionnumwidth}[1]{\setlength{\@tocsubsubsectionnumwidth}{#1}}
\newcommand{\settocsectionindent}[1]{\setlength{\@tocsectionindent}{#1}}
\newcommand{\settocsubsectionindent}[1]{\setlength{\@tocsubsectionindent}{#1}}
\newcommand{\settocsubsubsectionindent}[1]{\setlength{\@tocsubsubsectionindent}{#1}}
\renewcommand{\l@section}{\section@tocline{1}{\@tocsectionvskip}{\@tocsectionindent}{}{\@tocsectionformat}}%
\renewcommand{\l@subsection}{\subsection@tocline{2}{\@tocsubsectionvskip}{\@tocsubsectionindent}{}{\@tocsubsectionformat}}%
\renewcommand{\l@subsubsection}{\subsubsection@tocline{3}{\@tocsubsubsectionvskip}{\@tocsubsubsectionindent}{}{\@tocsubsubsectionformat}}%
\newcommand{\@tocsectionformat}{}
\newcommand{\@tocsubsectionformat}{}
\newcommand{\@tocsubsubsectionformat}{}
\def\csname toc@1format\endcsname{\@tocsectionformat}
\def\csname toc@2format\endcsname{\@tocsubsectionformat}
\def\csname toc@3format\endcsname{\@tocsubsubsectionformat}
\newcommand{\settocsectionformat}[1]{\renewcommand{\@tocsectionformat}{#1}}
\newcommand{\settocsubsectionformat}[1]{\renewcommand{\@tocsubsectionformat}{#1}}
\newcommand{\settocsubsubsectionformat}[1]{\renewcommand{\@tocsubsubsectionformat}{#1}}
\newlength{\@tocsectionvskip}
\newcommand{\settocsectionvskip}[1]{\setlength{\@tocsectionvskip}{#1}}
\newlength{\@tocsubsectionvskip}
\newcommand{\settocsubsectionvskip}[1]{\setlength{\@tocsubsectionvskip}{#1}}
\newlength{\@tocsubsubsectionvskip}
\newcommand{\settocsubsubsectionvskip}[1]{\setlength{\@tocsubsubsectionvskip}{#1}}
\patchcmd{\tocsection}{\indentlabel}{\makebox[\@tocsectionnumwidth][l]}{}{}
\patchcmd{\tocsubsection}{\indentlabel}{\makebox[\@tocsubsectionnumwidth][l]}{}{}
\patchcmd{\tocsubsubsection}{\indentlabel}{\makebox[\@tocsubsubsectionnumwidth][l]}{}{}
\newcommand{\@sectypepnumformat}{}
\renewcommand{\contentsline}[1]{%
  \expandafter\let\expandafter\@sectypepnumformat\csname @toc#1pnumformat\endcsname%
  \csname l@#1\endcsname}
\newcommand{\@tocsectionpnumformat}{}
\newcommand{\@tocsubsectionpnumformat}{}
\newcommand{\@tocsubsubsectionpnumformat}{}
\newcommand{\setsectionpnumformat}[1]{\renewcommand{\@tocsectionpnumformat}{#1}}
\newcommand{\setsubsectionpnumformat}[1]{\renewcommand{\@tocsubsectionpnumformat}{#1}}
\newcommand{\setsubsubsectionpnumformat}[1]{\renewcommand{\@tocsubsubsectionpnumformat}{#1}}
\renewcommand{\@tocpagenum}[1]{%
  \hfill {\mdseries\@sectypepnumformat #1}}
\let\oldappendix\appendix
\renewcommand{\appendix}{%
  \leavevmode\oldappendix%
  \addtocontents{toc}{%
    \protect\settowidth{\protect\@tocsectionnumwidth}{\protect\@tocsectionformat\sectionname\space}%
    \protect\addtolength{\protect\@tocsectionnumwidth}{2em}}%
}
\let\oldtableofcontents\tableofcontents
\renewcommand{\tableofcontents}{%
  \vspace*{-\linespacing}
  \oldtableofcontents}
\definecolor{mygreen}{rgb}{0,0.7,0.3}
\definecolor{myblue}{rgb}{0,0.50,1.20}
\definecolor{myorange}{rgb}{1,0.5,0.1}
\definecolor{fillred}{rgb}{1,0.9,0.9}
\definecolor{fillgreen}{rgb}{0.9,1,0.9}
\tikzstyle arrowstyle=[scale=1.1]
\tikzstyle directed=[postaction={decorate,decoration={markings,
    mark=at position 1 with {\arrow[arrowstyle]{latex}}}}]
\tikzstyle reverse directed=[postaction={decorate,decoration={markings,
    mark=at position .45 with {\arrowreversed[arrowstyle]{latex};}}}]
\newcommand\qarrow[2]{\draw[->,shorten >=2pt,shorten <=2pt] (#1) -- (#2) [thick];} 
\newcommand\qdarrow[2]{\draw[->,dashed,shorten >=2pt,shorten <=2pt] (#1) -- (#2) [thick];} 
\newcommand\rqstdarrow[2]{\draw[->,dashed,shorten >=2pt,shorten <=4pt,red] (#1) -- (#2) [thick];}
\newcommand\rqarrow[2]{\draw[->,shorten >=6pt,shorten <=6pt,red] (#1) -- (#2) [thick];}
\newtheorem{theorem}{Theorem}[section]
\newtheorem{lemma}[theorem]{Lemma}
\newtheorem{proposition}[theorem]{Proposition}
\newtheorem{corollary}[theorem]{Corollary}
\newtheorem{conjecture}[theorem]{Conjecture}
\newtheorem{theorem-definition}[theorem]{Theorem-Definition}
\newtheorem{theorem-construction}[theorem]{Theorem-Construction}
\newtheorem{lemma-definition}[theorem]{Lemma--Definition}
\newtheorem{lemma-construction}[theorem]{Lemma--construction}
\newtheorem{definition}[theorem]{Definition}
\theoremstyle{definition}
\newtheorem{remark}[theorem]{Remark}
\newtheorem{example}[theorem]{Example}
\newcommand{\bg}{\begin{equation}\begin{gathered}}
\newcommand{\eg}{\end{gathered}\end{equation}}
\newcommand{\g}{\mathfrak{g}}
\newcommand{\uqg}{\mathcal{U}_q(\mathfrak{g})}
\newcommand{\tqg}{\widetilde{\mathcal{U}}_q(\mathfrak{g})}
\newcommand{\Z}{\mathbb{Z}}
\newcommand{\Uq}{{\mathcal U}_{q}}
\newcommand{\bUq}{{\bf U}_q(\mathfrak{g})}
\newcommand{\tbUq}{\widetilde{\bf U}_q(\mathfrak{g})}
\newcommand{\bF}{{\bf F}}
\newcommand{\bE}{{\bf E}}
\newcommand{\bK}{{\bf K}}
\newcommand{\tbK}{\widetilde{\bf K}}
\newcommand{\old}[1]{}
\newcommand{\Q}{{\mathbb Q}}
\newcommand{\A}{{\mathsf{A}}}
\newcommand{\B}{{\mathsf{B}}}
\newcommand{\G}{\mathsf{G}}
\newcommand{\U}{\mathsf{U}}
\renewcommand{\H}{\mathsf{H}}
\newcommand{\bS}{{\mathbb{S}}}
\newcommand{\lms}{\longmapsto}
\newcommand{\lra}{\longrightarrow}
\newcommand{\be}{\begin{equation}}
\newcommand{\ee}{\end{equation}}
\newcommand{\bt}{\begin{theorem}}
\newcommand{\et}{\end{theorem}}
\newcommand{\bd}{\begin{definition}}
\newcommand{\ed}{\end{definition}}
\newcommand{\bp}{\begin{proposition}}
\newcommand{\ep}{\end{proposition}}
\newcommand{\bl}{\begin{lemma}}
\newcommand{\el}{\end{lemma}}
\newcommand{\bc}{\begin{corollary}}
\newcommand{\ec}{\end{corollary}}
\newcommand{\bcon}{\begin{conjecture}}
\newcommand{\econ}{\end{conjecture}}
\newcommand{\la}{\label}
\newcommand{\OPW}{\mathcal{O}_q(\mathscr{P}_{\G, \odot})^W}
\title[Cluster nature of quantum groups]{Cluster nature of quantum groups}
\author{Linhui Shen}
\date{}
\address{Department of Mathematics, Michigan State University, East Lansing, MI} 
\email{linhui@math.msu.edu}
\begin{document}

\maketitle

\begin{abstract} 
We present a rigid cluster model to realize the quantum group ${\bf U}_q(\mathfrak{g})$ for $\mathfrak{g}$ of type ADE. That is, we prove that there is a natural Hopf algebra isomorphism from  the quantum group ${\bf U}_q(\mathfrak{g})$ to a quotient algebra of the Weyl group invariants of the Fock-Goncharov quantum cluster algebra $\mathcal{O}_q(\mathscr{P}_{\G,\odot})$. 
 By applying the quantum duality of cluster algebras, we show that ${\bf U}_q(\mathfrak{g})$ admits a natural basis $\overline{\bf \Theta}$ whose structural coefficients are in $\mathbb{N}[q^{\frac{1}{2}}, q^{-\frac{1}{2}}]$. The basis $\overline{\bf \Theta}$ satisfies an invariance property under Lusztig's braid group action, the Dynkin automorphisms, and the star anti-involution.  
\end{abstract}

\tableofcontents

\section{Introduction}
The quantum group $\mathcal{U}_q(\mathfrak{g})$, introduced by Drinfeld \cite{Dr} and Jimbo \cite{Ji} in 1985, continues to be a central topic in representation theory, knot theory, statistical physics, etc. Cluster algebras, introduced by Fomin and Zelevinsky \cite{FZI}, are a class of commutative algebras associated with integer skew-symmetrizable matrices and their mutations. Since its inception, the
rapid developments of cluster theory have found numerous exciting applications in representation theory, quantum geometry, contact topology, etc. The quantization of {\it cluster Poisson varieties} and their principal series representations has been systematically studied by Fock and Goncharov \cite{FGrep}. One of the primary motivations for studying cluster algebras is understanding several remarkable features of quantum groups, including {\it total positivity} and {\it canonical bases}. We realize this goal in the present paper by providing a rigid cluster model for the quantum groups of type $ADE$.

\smallskip 

Let $\mathbb{L}=\mathbb{Z}[q^{\frac{1}{2}}, q^{-\frac{1}{2}}]$ be the ring of Laurent polynomials in the variable $q^{\frac{1}{2}}$ with integer coefficients and let $\mathbb{K}= \mathbb{Q}(q^{\frac{1}{2}})$ be its quotient field. 
The quantum group $\mathcal{U}_q(\mathfrak{g})$ is a quantized universal enveloping algebra associated with a Lie algebra $\mathfrak{g}$. In this paper, we assume that $\mathfrak{g}$ is of type ADE. The quantum group $\mathcal{U}_q(\mathfrak{g})$ carries an integral form ${\bf U}_q(\mathfrak{g})$, defined as the $\mathbb{L}$-linear span of a rescaled PBW basis (cf. Section \ref{Sec.quan.group}). The semiclassical limit of ${\bf U}_q(\mathfrak{g})$ recovers the coordinate ring of the Poisson Lie dual group.

\smallskip
Let $\G$ be a split algebraic group over $\mathbb{Q}$  such that  ${\rm Lie}\,\G=\mathfrak{g}$. We assume that $\G$ has trivial center and let $\tilde{\G}$ be its universal cover. 
Let $\bS$ be an oriented topological surface with punctures and marked boundary points.
In their seminal work \cite{FGteich}, Fock and Goncharov introduced a pair $(\mathscr{A}_{\tilde{\G},\bS}, \mathscr{X}_{\G, \bS})$ of moduli spaces, as an algebraic-geometric avatar of higher Teichm\"uller theory. In {\it loc.cit.}, they further show that the pair $(\mathscr{A}_{{\rm SL}_{n+1},\bS}, \mathscr{X}_{{\rm PGL}_{n+1}, \bS})$ carries a natural cluster structure. The same result has been extended by Ian Le \cite{Le} to classical groups, based on a case by case construction. In \cite{GS3}, Goncharov and the author give a uniform construction of the desired cluster structures for all semisimple groups.
\smallskip

 The papers \cite{GS1, GS3} introduce a decorated character variety $\mathscr{P}_{\G, \bS}$ as an enhanced version of Fock and Goncharov's moduli space $\mathscr{X}_{\G,\bS}$  by assigning a {\it pinning} to every boundary interval. The importance of the pinning is reflected at least in two aspects. First, when $\bS$ has boundary, the pinning data increases the dimension of $\mathscr{X}_{\G,\bS}$  by adding new frozen cluster variables, so that the pair $(\mathscr{P}_{\G, \bS}, \mathscr{A}_{\widetilde{\G},\bS})$ forms a {\it cluster ensemble}.
Second, the pinnings allow us to amalgamate different surfaces along boundary intervals, which eventually makes the moduli spaces into a geometric avatar of TQFT. The coordinate ring of  $\mathscr{P}_{\G, \bS}$ is a {\it cluster Poisson algebra} (\cite[Th.1]{Sh2}) and can be quantized to a Fock-Goncharov {\it quantum cluster algebra}  $\mathcal{O}_q(\mathscr{P}_{\G, \bS})$.

\smallskip

In this paper, we focus on the case when $\bS=\odot$ is a once-punctured disk with two marked points. As proved in \cite{GS2, GS3}, the Weyl group acts on $\mathcal{O}_q(\mathscr{P}_{\G, \odot})$ via cluster automorphisms. Denote by $\mathcal{O}_q(\mathscr{P}_{\G, \odot})^W$ the subalgebra of $W$-invariants of $\mathcal{O}_q(\mathscr{P}_{\G, \odot})$. 
The {\it outer monodromies} $\mathbb{O}_1, \ldots, \mathbb{O}_r$ are particular {\it Casimir} elements in  $\mathcal{O}_q(\mathscr{P}_{\G, \bS})^W$ that correspond to simple positive roots of $\G$. Let $\mathcal{I}$ be the ideal generated by $\mathbb{O}_i-1$ for $i=1,\ldots, r.$ 
The quotient algebra $\mathcal{O}_q(\mathscr{P}_{\G, \odot})^W{\slash}\mathcal{I}$ is naturally a Hopf algebra, obtained via amalgamation of punctured disks.

\smallskip

Our main result is as follows.
\bt
\label{main1}
There is a natural Hopf algebra isomorphism
\[
\kappa: ~ {\bf U}_q(\mathfrak{g})\stackrel{\sim}{\lra} \mathcal{O}_q(\mathscr{P}_{\G, \odot})^W{\Big \slash} \mathcal{I}.
\]
\et 
 Theorem \ref{main1} is proven in Section \ref{sec5.4}.
Below we include a few historical comments. 
\begin{itemize}
\item The idea that relates the quantum group $\mathcal{U}_q(\mathfrak{g})$ to the character variety of the marked once punctured disk is originally due to Fock and Goncharov in 2006. 

\smallskip

\item Following a suggestion of Fock, Schrader and Shapiro \cite{SS} construct a concrete injective algebra homomorphism from the quantum group $\mathcal{U}_q(\mathfrak{sl}_{n+1})$ into a quantum torus algebra associated with the Fock-Goncharov quiver $\mathcal{D}_n$ for $\mathscr{A}_{{\rm SL}_{n+1},\odot}$. With the help from the author, Schrader and Shapiro further show that the image of $\mathcal{U}_q(\mathfrak{sl}_{n+1})$ is contained in the quantum cluster algebra for $\mathcal{D}_n$. In this way, they obtain, for the first time,  an embedding $\kappa$ of the quantum group $\mathcal{U}_q(\mathfrak{sl}_{n+1})$ into a quantum cluster algebra. However, their embedding is not surjective and the image of $\kappa$ is not known. The paper \cite{SS} further obtains a description of the coproduct of $\mathcal{U}_q(\mathfrak{sl}_{n+1})$ in terms of the corresponding quantum cluster algebra associated with the marked twice punctured disk, and shows a remarkable correspondence of the action of the $R$-matrix with the half-Dehn twist of the twice punctured disk.

\smallskip 

\item For other simple Lie algebras $\mathfrak{g}$, Ivan Ip \cite{Ip} constructed an embedding of the quantum group $\mathcal{U}_q(\mathfrak{g})$ into a {\it quantum torus algebra} associated with a certain quiver $\mathcal{D}_{\mathfrak{g}}$. However, it is not clear whether the image of $\mathcal{U}_q(\mathfrak{g})$ is in the quantum cluster algebra associated with $\mathcal{D}_\mathfrak{g}$.

\smallskip

\item The paper \cite{GS1} introduces, for each marked  point on $\bS$, the potential functions $\mathcal{W}_i$ and the $h$-distance functions $\mathcal{K}_i$ on $\mathscr{P}_{\G, \bS}$. When $\bS$ is a disk, the tropicalizations of $\mathcal{W}_i$ and $\mathcal{K}_i$ give rise to a natural parametrization of  the Mirkovi\'c-Vilonen basis for the tensor invariants of the representations of the Langlands dual group $\G^L$. 

\smallskip 

\noindent For each marked point on $\bS$, the paper \cite{GS3} defines an natural embedding of $\mathcal{U}_q(\mathfrak{g})$ into $\mathcal{O}_q(\mathscr{P}_{\G, \bS})$,  by sending the generators ${\bf E}_i, {\bf K}_i$ to the quantum cluster promotion of $\mathcal{W}_i$ and $\mathcal{K}_i$ respectively.
When specializing to $\odot$, the paper {\it loc.cit.} gives a natural Hopf algebra embedding $\kappa$ from the quantum group ${\bf U}_q(\mathfrak{g})$ into $\mathcal{O}_q(\mathscr{P}_{\G, \odot})^W{\Big \slash}\mathcal{I}$, and further conjectures that the map $\kappa$ is an isomorphism. For $\mathfrak{sl}_2$, the surjectivity of $\kappa$ can be verified by a direct calculation, since the underlying cluster structure is simply of type $A_1\times A_1$. For any other higher rank $\mathfrak{g}$ beyond $\mathfrak{sl}_2$, the surjectivity of $\kappa$ was not known, and seems impossible to verify via a direct calculation, since the size of the underlying quiver increases tremendously. For example, the ice quiver for $\mathfrak{sl}_4$, as shown on Figure \ref{odotquiver}, has 12 mutable vertices and 6 frozen vertices.

\smallskip

\item Our Theorem \ref{main1} proves the isomorphism of $\kappa$ for $\mathfrak{g}$ of type ADE, and therefore provides a {\it rigid} cluster realization of ${\bf U}_q(\mathfrak{g})$ for the first time.
It is worth mentioning  that the isomorphism $\kappa$ is defined over $\mathbb{L}=\mathbb{Z}[q^{\frac{1}{2}}, q^{-\frac{1}{2}}]$. Therefore it is still valid when $q^{\frac{1}{2}}$ is a root of unity. Its connection to the small quantum groups from the perspective of cluster theory is an interesting direction for future research.

\end{itemize}

As an application of Theorem \ref{main1}, we obtain a natural $\mathbb{L}$-linear basis of ${\bf U}_q(\mathfrak{g})$ which satisfies many remarkable properties. 

\bt 
\label{main.theorem.baisi.p}
The quantum group ${\bf U}_q(\mathfrak{g})$ admits a natural linear basis $\overline{\bf \Theta}$ satisfying the following properties.
\begin{itemize}
\item[a)] The structural coefficients of $\overline{\bf \Theta}$ are in $\mathbb{N}[q^{\frac{1}{2}}, q^{-\frac{1}{2}}]$.
\item[b)] The basis $\overline{\bf \Theta}$, as a set, is preserved by Lusztig's braid group action and the Dynkin automorphisms.  
\item[c)] Every element in  $\overline{\bf \Theta}$ is self-adjoint, i.e., invariant under the star anti-involution.
\item[d)] The basis is naturally parametrized by the set ${\U}^L_+(\mathbb{Z}^t)\times X^*(\H)\times {\U}^L_+(\mathbb{Z}^t)$, where $\U^L_+(\mathbb{Z}^t)$ is the set of Lusztig data, and $X^*(\H)$ is the weight lattice of $\G$.
\end{itemize}
\et 

We shall name $\overline{\bf \Theta}$ the {\it cluster canonical bases} because they are constructed through the quantum cluster duality and our cluster model for ${\bf U}_q(\mathfrak{g})$ as in Theorem \ref{main1}. Below we include a few more remarks on related results.

\begin{itemize}
\item Lusztig introduced the algebra $\dot{\bf U}$ as a modified form of $\mathcal{U}_q(\mathfrak{g})$, and further constructed a canonical basis $\dot{\bf B}$ for $\dot{\bf U}$. Note that  $\dot{\bf U}$ is an algebra without unit. Therefore $\dot{\bf B}$ is not a basis of $\mathcal{U}_q(\mathfrak{g})$. See \cite[Part IV]{Lus} for more details. 

\smallskip 

\item Berenstein and Greenstein \cite{BG} constructed a basis ${\bf B}_{\mathfrak{g}}$, under the name of {\it double canonical basis}, for the quantum group $\mathcal{U}_q(\mathfrak{g})$, via using a variant of Lusztig's Lemma. They further established several nice properties about the 
basis. However, it is not known whether ${\bf B}_{\mathfrak{g}}$ is equivariant under Lusztig's braid group action (cf. Conjecture 1.16 of {\it loc.cit.}). The structural coefficients of ${\bf B}_{\mathfrak{g}}$ are shown to be in $\mathbb{Z}[q^{\frac{1}{2}}, q^{-\frac{1}{2}}]$, but it is not clear whether they are in $\mathbb{N}[q^{\frac{1}{2}}, q^{-\frac{1}{2}}]$.
For $\mathfrak{sl}_2$, a direct calculation shows that ${\bf B}_{\bf g}$ coincides with our basis $\overline{\bf \Theta}$. Its comparison for $\mathfrak{g}$ beyond $\mathfrak{sl}_2$ remains mysterious, and is an interesting direction for future research.

\smallskip

\item 
The study of canonical bases for (quantum) cluster algebras is a central topic in cluster theory.
The Duality Conjecture of Fock and Goncharov \cite{FGensemble} predicts that every quantum cluster algebra admits a natural basis, parametrized by the tropicalization of its Langlands dual cluster $K_2$ variety. 

\smallskip

\noindent  For the classical case when $q=1$, Gross, Hacking, Keel, and Kontsevich \cite{GHKK} constructed a family of formal power series, called $\theta$ series, by counting broken lines in the associated scattering diagrams. When the Donaldson-Thomas transformation of the cluster variety is a cluster transformation, or equivalently when its underlying quiver admits a reddening sequence in the sense of \cite{KelDT},  the $\theta$ series provide an actual linear basis of the corresponding cluster Poisson algebra. 

\smallskip 

\noindent More recently, by combining the tools developed in \cite{GHKK} and  Donaldson-Thomas theory, Davison and Mandel \cite{DavMan} prove the quantum cluster duality for skew-symmetric cases, under the same assumption on the existence of reddening sequences.

 \smallskip
 
 \item Theorem \ref{main1} allows us to apply the basis theory for quantum cluster algebras to the study of quantum groups. The proof of Theorem \ref{main.theorem.baisi.p} crucially uses several tools developed in \cite{GS2, GS3}.
First, the papers {\it loc.cit.} prove the clusterness of Donaldson-Thomas transformations for $\mathscr{P}_{\G, \bS}$. Therefore one may apply the result of \cite{DavMan} to obtain an $\mathbb{L}$-linear basis for $\mathcal{O}_q(\mathscr{P}_{\G, \bS})$. 
Second, it should be noted that our basis $\overline{\bf \Theta}$ is not exactly the theta basis of \cite{DavMan}.
Instead, in \cite{GS2, GS3}
we show that that Weyl group $W$ acts on $\mathcal{O}_q(\mathscr{P}_{\G, \odot})$ via quantum cluster automorphisms. As a consequence, one may take the sums of quantum theta functions of \cite{DavMan} along the $W$-orbits, whose projections to $\mathcal{O}_q(\mathscr{P}_{\G, \odot})^W{\Big \slash} \mathcal{I}$ give rise to the linear basis $\overline{\bf \Theta}$ for ${\bf U}_q(\mathfrak{g})$.

\end{itemize}

\medskip

\paragraph{{\bf Acknowledgments.}} I am grateful to Alexander Goncharov for urging me to finish the paper and for numerous inspiring discussions.  I wish to thank Joel Kamnitzer and Bernhard Keller for many helpful conversations. I was supported by NSF grant DMS-2200738.

\section{Preliminaries on Quantum Groups}
\label{Sec.quan.group}
\subsection{Definitions}
Let $\mathfrak{g}$ be a rank $r$ complex Lie algebra of type ADE.
Associated with $\mathfrak{g}$ is a $r\times r$ Cartan matrix $(a_{ij})$. 

\smallskip

We consider the unital associative $\mathbb{K}$-algebra $\tqg$ with the generators ${\bf E}_i, {\bf F}_i, {\bf K}_i, \widetilde{\bf K}_i$  ($1\leq i \leq r$) and  the relations
\be
\bK_i\bK_j=\bK_j\bK_i, \hskip 1cm \tbK_i\tbK_j=\tbK_j\tbK_i, \hskip 1cm \tbK_i\bK_j = \bK_j \tbK_i,
\ee
\be
\left\{ \begin{array}{cc} \bK_i\bE_j =q^{a_{ij}}\bE_j \bK_i,   \quad \quad   \tbK_i\bE_j =q^{-a_{ij}}\bE_j \tbK_i, \\
\bK_i\bF_j =q^{-a_{ij}}\bF_j \bK_i, \quad \quad \tbK_i\bF_j =q^{a_{ij}}\bF_j \tbK_i,
\end{array}
\right.
\ee
\be
\label{eq3}
\bE_i \bF_j -\bF_j \bE_i =\delta_{ij}(q - q^{-1})(\bK_i -\tbK_i),
\ee
\be
\left\{ \begin{array}{cc}
     \bE_i^2\bE_j -(q+q^{-1})\bE_i\bE_j \bE_i + \bE_j \bE_i^2=0 \quad \mbox{if } a_{ij}=-1,  \\
     \bE_i\bE_j -\bE_j\bE_i=0 \quad \mbox{if } a_{ij}=0, 
\end{array}\right.
\ee
\be
\left\{ \begin{array}{cc}
     \bF_i^2\bF_j -(q+q^{-1})\bF_i\bF_j \bF_i + \bF_j \bF_i^2=0 \quad \mbox{if } a_{ij}=-1,  \\
     \bF_i\bF_j -\bF_j\bF_i=0 \quad \mbox{if } a_{ij}=0.
\end{array}\right.
\ee
Note that $\bK_i\tbK_i$ commutes with every generator. Hence,  $\bK_i\tbK_i$ is in the center of $\tqg$. 

\smallskip

The quantum group $\uqg$ is the quotient algebra of $\tqg$ modulo the
ideal generated by $\bK_i\tbK_i-1$ for $1\leq i\leq r$. The generators $\{\bE_i, \bF_i, \bK_i\}$ are the rescaled version of the generators $\{E_i, F_i, K_i\}$ used in \cite{Lus}:
\be
\la{scaled.relation}
\bE_i= q^{-\frac{1}{2}}(q-q^{-1})E_i, \qquad \bF_i=q^{\frac{1}{2}}(q^{-1}-q)F_i, \qquad {\bK}_i = K_i.
\ee

\smallskip

We denote by 
\[\mathcal{D}_q(\mathfrak{b}):=\tqg[\bK_i^{-1}, \tbK_i^{-1}]_{1\leq i\leq r}\] 
the localization of $\tqg$ at the Cartan part. The algebra $\mathcal{D}_q(\mathfrak{b})$ coincides with the quantized Drinfeld double of the Borel subalgebra $\mathfrak{b}$.

\subsection{Braid group action and PBW basis}

For $1\leq i \leq r$, let $T_i$ be the $\mathbb{K}$-algebra automorphism of $\mathcal{D}_q(\mathfrak{b})$ defined by 
\[
\bE_j \mapsto q^{-1}\bK_j^{-1}\bF_j, \quad \bF_j \mapsto q\bE_j\tbK_j^{-1}, \quad  \bK_j \mapsto \bK_j^{-1}, \quad  \tbK_j \mapsto \tbK_j^{-1}, \qquad   \mbox{if }j=i; \]
\[   
\bE_j \mapsto \bE_j, \quad  \bF_j \mapsto \bF_j, \quad  \bK_j\mapsto \bK_j, \quad  \tbK_j \mapsto \tbK_j, \qquad   \mbox{if } a_{ij}=0; \]
\[
\left\{ \begin{array}{c}\displaystyle{\bE_j \mapsto \frac{q^{1/2}\bE_j\bE_i - q^{-1/2}\bE_i\bE_j}{q-q^{-1}}, \quad  \bF_j \mapsto \frac{q^{1/2}\bF_j\bF_i - q^{-1/2}\bF_i\bF_j}{q-q^{-1}}},\\  \bK_j \mapsto \bK_i\bK_j, \quad   \tbK_j \mapsto \tbK_i\tbK_j,\end{array} \right.  \qquad \mbox{if } a_{ij}=-1.\]
The automorphisms $T_i$ satisfy the braid relations:
\[
\left\{ \begin{array}{cl}
    T_iT_jT_i=T_jT_iT_j & \mbox{if } a_{ij}=-1,  \\
    T_iT_j = T_j T_i & \mbox{if } a_{ij}=0. 
\end{array}
\right.
\]
Therefore, they give rise to a braid group action on $\mathcal{D}_q(\mathfrak{b})$. 

Note that $\widetilde{\mathcal{U}}_q(\mathfrak{g})$ is not closed under the braid group action. In particular, \[T_i({\bf K}_i)={\bf K}_i^{-1}\notin \widetilde{\mathcal{U}}_q(\mathfrak{g}).\]

Each $T_i$ preserves the ideal generated by all $\bK_j\tbK_j-1$.
Therefore, the braid group action on $\mathcal{D}_q(\mathfrak{b})$ descends to an action on $\uqg$. After the normalization \eqref{scaled.relation}, this braid group action coincides with Lusztig's braid group action on $\uqg$. 

\smallskip

Fix a reduced word ${\bf i}=(i_1, \ldots, i_n)$ for the longest element $w_0$ in the Weyl group associated with $\mathfrak{g}$. For $1\leq k\leq n$, we set 
\be
\label{pbw.c1}
\begin{array}{c}
    \bE_{{\bf i}, k}:= T_{i_1}T_{i_2}\ldots T_{i_{k-1}} \bE_{i_k},   \\
       \bF_{{\bf i}, k}:= T_{i_1}T_{i_2}\ldots T_{i_{k-1}} \bF_{i_k}.
\end{array}
\ee
Let $\mathbb{N}=\{0,1,2, \ldots\}$. Given two vectors $\vec{a}=(a_1, \ldots, a_n)$ and $\vec{c}=(c_1, \ldots, c_n)$ in $\mathbb{N}^{n}$, set 
\[
\bE_{\bf i}(\vec{a})= \bE_{{\bf i},1}^{a_1} \bE_{{\bf i},2}^{a_2}\cdots \bE_{{\bf i},n}^{a_n},
\quad \quad 
\bF_{\bf i}(\vec{c})= \bF_{{\bf i},1}^{c_1} \bF_{{\bf i},2}^{c_2}\cdots \bF_{{\bf i},n}^{c_n}.
\]
For $\vec{b}=(b_1,\ldots,b_r)$ and $\vec{d}=(d_1, \ldots, d_r)$ in $\mathbb{Z}^r$, we set
\be
\label{PBW.as12}
\bK(\vec{b})= \bK_{1}^{b_1}\cdots \bK_{r}^{b_r},
\quad \quad 
\tbK(\vec{d})= \tbK_{1}^{d_1}\cdots \tbK_{r}^{d_r}.
\ee
The set 
\be
\label{dimofp}
\left 
\{
\bE_{\bf i}(\vec{a})\cdot \bK(\vec{b}) \cdot \bF_{\bf i}(\vec{c})\cdot \tbK(\vec{d}) ~\middle |~ (\vec{a}, \vec{b}, \vec{c},\vec{d}) \in \mathbb{N}^{2n+2r}
\right\}
\ee
forms a $\mathbb{K}$-linear basis of $\tqg$, called the Poincare-Birkhoff-Weil (PBW) basis. 
We denote by $\tbUq$ the $\mathbb{L}$-linear span of the PBW basis. Following Theorem \ref{iso,mai}, we see that $\tbUq$ is a $\mathbb{L}$-subalgebra inside $\tqg$, and does not depend on the reduced word ${\bf i}$ chosen. 

\smallskip 

Similarly, the algebra $\mathcal{D}_q(\mathfrak{b})$ has a $\mathbb{K}$-linear basis
\[
\left 
\{
\bE_{\bf i}(\vec{a})\cdot \bK(\vec{b}) \cdot \bF_{\bf i}(\vec{c})\cdot \tbK(\vec{d}) ~\middle |~ \vec{a}, \vec{c}\in \mathbb{N}^n;  \vec{b},\vec{d} \in \mathbb{Z}^{r}
\right\}.
\]
Let ${\bf D}_q(\mathfrak{b})$ be the $\mathbb{L}$-linear span of the above basis. Then ${\bf D}_q(\mathfrak{b})$ is a  $\mathbb{L}$-subalgebra of $\mathcal{D}_q(\mathfrak{b})$.

\smallskip

By imposing the conditions $\bK_i\tbK_i=1$, we obtain the quantum group $\Uq(\mathfrak{g})$ with a basis
\[
\left 
\{
\bE_{\bf i}(\vec{a})\cdot \bK(\vec{b}) \cdot \bF_{\bf i}(\vec{c}) ~\middle |~ \vec{a}, \vec{c}\in \mathbb{N}^n;  \vec{b} \in \mathbb{Z}^{r}
\right\}.
\]
Their $\mathbb{L}$-linear gives a $\mathbb{L}$-subalgebra $\bUq$ inside $\Uq(\mathfrak{g})$.

\subsection{Dynkin automorphisms and stat anti-involution}
A Dynkin automorphism $\sigma$ of $\mathfrak{g}$ is a permutation of the vertices of the Dynkin diagram of $\mathfrak{g}$ that preserves the Cartan matrix: 
\[
a_{\sigma(i)\sigma(j)}= a_{ij}.
\]
The set of Dynkin automorphisms forms a finite group and coincides with the outer automorphism group ${\rm Out}(\G)$ of $\G$.

Each $\sigma$ includes an algebra automorphism of ${\bf D}_q(\mathfrak{b})$ such that 
\[
{\bf E}_i \longmapsto {\bf E}_{\sigma(i)}; \qquad {\bf F}_i \longmapsto {\bf F}_{\sigma(i)}; \qquad {\bf K}_i\longmapsto {\bf K}_{\sigma(i)}; \qquad \widetilde{\bf K}_i\longmapsto \widetilde{\bf K}_{\sigma(i)}.
\]
The action $\sigma$ further induces an action on $\widetilde{\bf U}_q(\mathfrak{g})$ and on ${\bf U}_q(\mathfrak{g})$.

\smallskip
The {\it star anti-involution} $\ast$ is an anti-automorphism of ${\bf D}_q(\mathfrak{b})$ that preserves the generators ${\bf E}_i, {\bf F}_i, {\bf K}_i, \widetilde{\bf K}_i$, maps $q$ to $q^{-1}$, and satisfies
\[
({\bf fg})^\ast ={\bf g}^\ast{\bf f}^\ast, \qquad \forall {\bf f},{\bf g}\in {\bf D}_q(\mathfrak{b}).  
\]
The map $\ast$ makes ${\bf D}_q(\mathfrak{b})$ a $\ast$-algebra. 
We say an element ${\bf f}$ in ${\bf D}_q(\mathfrak{b})$ is {\it self-adjoint} if ${\bf f}={\bf f}^\ast$. 
The same anti-involution $\ast$ descends to $\widetilde{\U}_q(\mathfrak{g})$ and ${\U}_q(\mathfrak{g})$.

\section{Basics on Cluster Algebras}
For the convenience of the reader, we briefly recall several basic definitions concerning the quantum cluster algebra of Fock and Goncharov, mainly following the conventions of \cite{FGensemble}. 
It is worth mentioning that the Fock-Goncharov quantum cluster algebras are different from the Berenstein-Zelevinsky version of quantum cluster algebras in \cite{BZqua}. In particular, the latter is well defined only when the underlying exchange matrices are of full rank, and its definition depends on a particular choice of an inverse matrix. The Fock-Goncharov quantum cluster algebra is well defined for arbitrary exchange matrices. Further relations about the above two quantum cluster algebras can be found in the appendix of \cite{GS3}.

\subsection{Quiver mutations}
An {\it ice quiver} a triple $Q=(V, V_0, \varepsilon)$, where $V$ is a finite set, $V_0$ is a subset of $V$, and $\varepsilon=\{\varepsilon_{ij}\}$ is a $\frac{1}{2}\mathbb{Z}$ valued function on $V\times V$, such that $\varepsilon_{ij}=-\varepsilon_{ji}$, and $\varepsilon_{ij}$ is an integer unless $i, j \in V_0$.
Such an ice quiver $Q$ can be visualized by a directed graph without loops or two cycles, such that $V$ is the set of vertices, and the number of arrows from $i$ to $j$ is 
$[\varepsilon_{ij}]_+:= \max\{0,\varepsilon_{ij}\}$. Elements in $V_0$ are called frozen vertices and are usually denoted by boxes in their visualization. 

\smallskip

Let $k\in V-V_0$. The {\it quiver mutation} in the direction $k$ produces a new ice quiver $u_kQ=(V, V_0, \varepsilon')$ such that 
\begin{align}
\label{quiver mutation}
\varepsilon_{ij}'&= \begin{cases}
-\varepsilon_{ij}, &\mbox{if } k\in \{i,j\},\\
\varepsilon_{ij}, &\mbox{if } \varepsilon_{ik}\varepsilon_{kj}<0, ~~ k\notin\{i,j\},\\
\varepsilon_{ij}+|\varepsilon_{ik}|\varepsilon_{kj}, &\mbox{if } \varepsilon_{ik}\varepsilon_{kj}>0, ~~ k\notin\{i,j\}.
\end{cases}
\end{align}
Note that the quiver mutation is involutive: $\mu_k^2(Q)=Q$.
\begin{example}
Let $V=\{a,b,c,d\}$, $V_0=\{c,d\}$, and 
\[\varepsilon=
\begin{pmatrix}
0 & -1 & 0 & 0 \\
1 & 0 & -1 & 1 \\
0 & 1 & 0 & -\frac{1}{2}\\
0 & -1 & \frac{1}{2} & 0 \\
\end{pmatrix}
\]
The ice quiver $Q=(V, V_0, \varepsilon)$ is illustrated by the left graph in Figure \ref{fig1}. The dashed arrow between the frozen vertices  $c$ and $d$ indicates that $\varepsilon_{dc}=\frac{1}{2}$. The quiver mutation in the direction $b$ produces an ice quiver as illustrated by the right graph.  
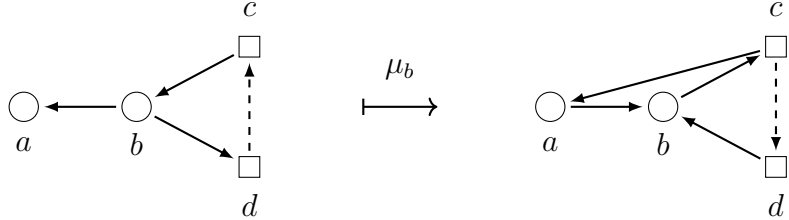
\begin{figure}[ht]
\begin{tikzpicture}
\begin{scope}[>=latex]
 \node[circle,draw] (C) at (0,0) {};
 \node at (0,-0.5) {$a$};
 \node[circle,draw] (D) at (1.5,0) {};
 \node at (1.5,-0.5) {$b$};
 \node[rectangle,draw] (A) at (3,0.8) {};
 \node at (3,1.3) {$c$};
 \node[rectangle,draw] (B) at (3,-0.8) {};
 \node at (3, -1.3) {$d$};
\qarrow{D}{C};
\qarrow{A}{D};
\qarrow{D}{B};
\qdarrow{B}{A};
\end{scope}
\draw[thick,|->] (4.5,0) -- (5.5,0);
\node at (5,0.5) {$\mu_b$};
\begin{scope}[>=latex, xshift=7cm]
 \node[circle,draw] (C) at (0,0) {};
 \node at (0,-0.5) {$a$};
 \node[circle,draw] (D) at (1.5,0) {};
 \node at (1.5,-0.5) {$b$};
 \node[rectangle,draw] (A) at (3,0.8) {};
 \node at (3,1.3) {$c$};
 \node[rectangle,draw] (B) at (3,-0.8) {};
 \node at (3, -1.3) {$d$};
\qarrow{C}{D};
\qarrow{D}{A};
\qarrow{A}{C};
\qarrow{B}{D};
\qdarrow{A}{B};
\end{scope}
\end{tikzpicture}
\caption{A quiver mutation in the direction $b$.}
\label{fig1}
\end{figure}
\end{example}

\subsection{Fock-Goncharov quantum cluster algebras}
Let $Q$ be an ice quiver. The quantum torus algebra ${\bf T}_Q$ is a $\mathbb{L}$-algebra generated by $X_{i}^{\pm 1} ~(i\in V)$, satisfying 
\[
X_iX_j =q^{2\varepsilon_{ij}} X_jX_i.
\]
The pair $\Sigma=(Q, {\bf T}_{Q})$ is called a {\it quantum seed}. 

Let $\Lambda$ be a free abelian group with basis $\{e_i\}_{i\in V}$. Let us choose an order $e_1, \ldots, e_m$ of the basis of $\Lambda$, where $m$ is the cardinality of $V$. Following the notation of \cite[\S 3.1]{FGensemble}, for each vector $a=\sum_{i\in V}a_ie_i \in \Lambda$, we introduce the normalized monomial 
\be
\la{norm.monomial}
X_a:= q^{-\sum_{i<j}a_ia_j \varepsilon_{ij}}\prod_{i=1}^m X_i^{a_i}
\ee
for future use. Note that the monomial $X_a$ does not depend on the order chosen. The set $\{X_a\}_{a\in \Lambda}$ forms a $\mathbb{L}$-linear basis of ${\bf T}_Q$.

Let $\mathcal{F}:={\rm Frac}({\bf T}_Q)$ be the non-commutative fraction field of the quantum torus algebra ${\bf T}_Q$. For example, see the Appendix of \cite{BZqua} for more details on the construction of $\mathcal{F}$. A mutation in the direction $k\in V-V_0$ creates a new collections of variables  $\{X_i'\}_{i\in V}$, where
\be
\label{Q.transition}
X_i'=\left\{ \begin{array}{ll}
X_k^{-1}, &\mbox{if } i=k,\\
X_i\displaystyle{\prod_{r=1}^{|\varepsilon_{ik}|} (1+q^{2r-1}X_k)}, &\mbox{if } \varepsilon_{ik}<0,\\
X_i\displaystyle{\prod_{r=1}^{\varepsilon_{ik}}(1+q^{1-2r}X_k^{-1})},
 &\mbox{if } \varepsilon_{ik}\geq 0.
\end{array}\right.
\ee
The new variables $X_i'$ satisfy the relation  \[X_i'X_j'=q^{2\varepsilon_{ij}'}X_j'X_i',\] where $\varepsilon_{ij}'$ is given by \eqref{quiver mutation}. 

The quantum torus algebra ${\bf T}_{\mu_k(Q)}$ is the $\mathbb{L}$-subalgebra of $\mathcal{F}$ generated by $X_i'^{\pm 1}$, $i\in V$.
The pair $\mu_k(\Sigma)=(\mu_kQ, {\bf T}_{\mu_k Q})$ is a mutated quantum seed in the direction $k$. We say that a quantum seed $\Sigma'=(Q', \mathbf{T}_{Q'})$ is mutation equivalent to $\Sigma$ if it can be obtained from $\Sigma$ by a sequence of quantum seed mutations. Denote by $|\Sigma|$ the class of quantum seeds that are mutation equivalent to $\Sigma$. 
\begin{definition}
Let $\Sigma=(Q, {\bf T}_Q)$ be a quantum seed. The quantum cluster algebra $\mathcal{O}_q(\mathscr{X}_{|\Sigma|})$ is the intersection of the quantum torus algebras $\mathbf{T}_{Q'}$ for all quantum seeds $(Q', \mathbf{T}_{Q'})\in |\Sigma|$:
\[
\mathcal{O}_q(\mathscr{X}_{|\Sigma|}):= \bigcap_{(Q', {\bf T}_{Q'})\in |\Sigma|} {\bf T}_{Q'} \subset \mathcal{F}.
\]
We will frequently write $\mathcal{O}_q(\mathscr{X}_{|\Sigma|})$ as $\mathcal{O}_q(\mathscr{X})$ when there is no ambiguity. 
\end{definition}

\begin{remark} The algebra $\mathcal{O}_q(\mathscr{X}_{|\Sigma|})$ has a natural $\ast$-algebra structure. Namely, there is an anti-automorphism $\ast$ on $\mathcal{O}_q(\mathscr{X}_{|\Sigma|})$ that preserves all the generators $X_i$, maps $q$ to $q^{-1}$. See \cite{FGrep} on a beautiful construction on the $\ast$-representations of $\mathcal{O}_q(\mathscr{X}_{|\Sigma|})$.
\end{remark}

\begin{remark}
The semiclassical limit of $\mathcal{O}_q(\mathscr{X}_{|\Sigma|})$ as $q^{\frac{1}{2}}\rightarrow 1$ gives rise to a cluster Poisson algebra $\mathcal{O}(\mathscr{X}_{|\Sigma|})$.
Geometrically, each seed in $|\Sigma|$ corresponds to an algebraic torus $\left(\mathbb{G}_m\right)^{\# V}$. We may glue all the tori together via the classical version of the transition maps \eqref{Q.transition}, obtaining a cluster Poisson variety $\mathscr{X}_{|\Sigma|}$.  
\end{remark}

For a quantum seed $\Sigma'=(Q', {\bf T}_{Q'})\in |\Sigma|$,  suppose $a=\sum_{i\in V} a_i e_i'$ is a vector such that 
\be
\sum_{i\in V}a_i \varepsilon_{ij}'\geq 0,\qquad \forall j\in V-V_0. 
\ee
The monomial $X_a'$, defined as in \eqref{norm.monomial}, is a global function, i.e., $X_a'\in \mathcal{O}_q(\mathscr{X}_{|\Sigma|})$. In this case, we call $X_a'$ a {\it global monomial}. 

When $\sum_{i\in V}a_i \varepsilon_{ij}'=0$ for every $j\in V$, then the global monomial $X_a'$ belongs to the center of $\mathcal{O}_q(\mathscr{X}_{|\Sigma|})$. Furthermore, for any $n\in \mathbb{Z}$, the power $X_{na}'=(X_{a}')^n$ remains a monomial in every seed in $|\Sigma|$. Such a element $X_a'$ is called a {\it Casimir}.

\subsection{Cluster $K_2$ varieties and tropicalization}

Let $Q=(V, V_0, \varepsilon)$ be an ice quiver. Associated with $Q$ is a split algebraic torus $\mathcal{A}_Q=(\mathbb{G}_m)^{\# V}$ with coordinates $\{A_i\}_{i\in V}$. The pair $\Sigma=(Q, \mathcal{A}_Q)$ is called a {\it cluster $K_2$ seed}. 

\smallskip 

The cluster $K_2$ mutation in the direction $k\in V-V_0$ creates a new seed $(\mu_kQ, \mathcal{A}_{\mu_{kQ}})$, where $\mathcal{A}_{\mu_kQ}$ is an algebraic torus with coordinates $\{A_i'\}_{i\in V}$ such that
\be
\la{k2mutation}
A_i'=\left\{ \begin{array}{cl}
A_i  &\mbox{if $i\neq k$};\\ 
{\displaystyle{\frac{\prod_{\varepsilon_{ki}\geq 0} A_i^{\varepsilon_{ki}}+\prod_{\varepsilon_{ki}\leq 0} A_i^{-\varepsilon_{ki}}}{A_k}}} & \mbox{if $i=k$}.
\end{array}
\right.
\ee
We recursively repeat the same procedure in the above new seeds in all the possible directions in $V-V_0$, obtaining possibly infinitely many cluster $K_2$ seeds. 
The {\it cluster $K_2$ variety} $\mathscr{A}_Q$ is the union of all the obtained cluster tori $\mathcal{A}_{Q'}$, glued together via sequences of the transition maps  \eqref{k2mutation}:
\[
\mathscr{A}_{|\Sigma|}=\bigcup \mathcal{A}_{Q'}.
\]
By definition, the ring of regular functions on $\mathscr{A}_{|\Sigma|}$ coincides with the upper cluster algebra of Bereinstein, Fomin, and Zelevinsky \cite{BFZ}.   The set $\{A_i'\}_{i\in V}$ of cluster $K_2$ variables for each $\mathcal{A}_{Q'}$ is called a {\it cluster chart}. The well-known Laurent phenomenon of cluster algebras asserts that
all the cluster variables $A_i'$ are regular functions on $\mathscr{A}_{|\Sigma|}$.
The variables $A_i$ for $i\in V_0$ are invariant under mutations and therefore are called {\it frozen cluster variables}.

\smallskip

The transition map \eqref{k2mutation} is subtraction free, and therefore gives rise to a positive structure  on $\mathscr{A}_{|\Sigma|}$. Recall that a nonzero rational function $F$ on $\mathscr{A}_{|\Sigma|}$ is positive if it can be presented as a ratio of polynomials with non-negative integral coefficients in the cluster variables of one (and hence every) cluster chart. Denote by $\mathbb{Q}_+(\mathscr{A}_{|\Sigma|})$ the set of all positive functions. The set $\mathbb{Q}_+(\mathscr{A}_{|\Sigma|})$ forms a semifield, i.e., it is closed under addition, multiplication, and addition. 

The tropical semifield $\mathbb{Z}^t$ is a set of integers with the multiplication $\cdot_t$ and the addition $+_t$ given by 
\[
a\cdot_tb = a+b, \qquad a+_tb =\min\{a, b\}.
\]
The  tropicalization of $\mathscr{A}_{|\Sigma|}$ is the set of semifield homomorphisms from $\mathbb{Q}_+(\mathscr{A}_{|\Sigma|})$ to $\mathbb{Z}^t$:
\[
\mathscr{A}_{|\Sigma|}(\mathbb{Z}^t):= {\rm Hom}_{\mbox{\it semifield}}\left(\mathbb{Q}_+(\mathscr{A}_{|\Sigma|}), \mathbb{Z}^t\right).
\]
Tautologically, for any positive function $F\in \mathbb{Q}_+(\mathscr{A}_{|\Sigma|})$, its tropicalization is a $\mathbb{Z}$-valued function on $\mathscr{A}_{|\Sigma|}(\mathbb{Z}^t)$, defined as
\[
F^t(l):=l(F), \qquad \forall l \in \mathscr{A}_{|\Sigma|}(\mathbb{Z}^t).
\]

\smallskip
The variety $\mathscr{A}_{|\Sigma|}$ is equipped with a canonical closed 2-form $\Omega$,  which be expressed in a cluster chart as 
\[
\Omega= \sum \varepsilon_{ij}\frac{A_i}{A_i}\wedge \frac{A_j}{A_j}.
\]
Following \cite{Fra}, a quasi-cluster automorphism of $\mathscr{A}_{|\Sigma|}$ is an automorphism of $\mathscr{A}_{|\Sigma|}$ that preserves the 2-form $\Omega$ and can be presented by a sequence of cluster mutations as \eqref{k2mutation} followed by a renormalization by frozen cluster variables. Denote by $\mathcal{G}_{|\Sigma|}$ the {\it quasi cluster modular group} that consists of all the quasicluster automorphisms.  The group $\mathcal{G}_{|\Sigma|}$ also acts on the quantum cluster algebra $\mathcal{O}_q(\mathscr{X}_{|\Sigma|})$. See \cite[\S19.2]{GS3} for more details.

\subsection{Quantum cluster duality} The quantum cluster Duality Conjecture, suggested by Fock and Goncharov in \cite[\S 4.3]{FGensemble}, asserts that the algebra $\mathcal{O}_q(\mathscr{X})$ admits a natural $\mathbb{L}$-linear basis, which can be canonically parametrized by the tropicalization of the Langlangds dual\footnote{In the skew-symmetric cases, which are the main setting of this paper, the Langlands dual $\mathscr{A}^L$ coincides with $\mathscr{A}$.} cluster $K_2$ variety $\mathscr{A}^L$. When the Donaldson-Thomas transformation of the cluster variety $\mathscr{X}_{|{\Sigma}|}$ is a cluster transformation, or equivalently when the underlying ice quiver $Q$ of $\mathscr{X}_{|{\Sigma}|}$ admits a reddening sequence in the sense of \cite{KelDT}, the classical version of the conjecture has been proven by Gross, Hacking, Keel, and Kontsevich in \cite{GHKK}. More recently, Davison and Mandel \cite{DavMan} prove the quantum cluster duality for skew-symmetric cases, under the same assumption on the existence of reddening sequences. Since the present paper will crucially use the quantum cluster duality, we recall its precise statement in this subsection.

\smallskip

Every seed $(Q', \{A_i'\})$ of $\mathscr{A}_{|\Sigma|}$ corresponds to a quantum seed $(Q', {\bf T}_{Q'})$ of $\mathcal{O}_q(\mathscr{X}_{|\Sigma|})$. Every tropical point $l\in \mathscr{A}_{|\Sigma|}(\Z^t)$ corresponds to a vector
\[
a(l):=\sum l(A_i') e_i', 
\]
which further corresponds to a monomial $X_{a(l)}'$ as in \eqref{norm.monomial}. Let $b=\sum b_i e_i'$. We introduce the partial order $b\geq a(l)$, if $b_i \geq l(A_i')$ for every $i\in V-V_0$ and $b_i=l(A_i')$ for every $i\in V_0$.

\smallskip

Now we are ready to state the quantum cluster duality result, proven by Davison and Mandel in \cite{DavMan}.
\bt
\label{quantum.cluster.dual}
Assume that the unfrozen part of the underlying quiver $Q$ admits a reddening sequence.
The algebra $\mathcal{O}_q(\mathscr{X}_{|\Sigma|})$ admits a $\mathbb{L}$-linear basis $\Theta\left(\mathcal{O}_q(\mathscr{X}_{|\Sigma|})\right)$, called the {\it quantum theta basis}. 

The basis $\Theta\left(\mathcal{O}_q(\mathscr{X}_{|\Sigma|})\right)$ satisfies the following properties.
\begin{itemize}
\item[1)] The basis $\Theta\left(\mathcal{O}_q(\mathscr{X}_{|\Sigma|})\right)$  is preserved by the action of $\mathcal{G}_{|\Sigma|}$.
\item[2)] All the global monomials are contained in $\Theta\left(\mathcal{O}_q(\mathscr{X}_{|\Sigma|})\right)$.
\item[3)] The structure constants for the multiplication of elements in $\Theta\left(\mathcal{O}_q(\mathscr{X}_{|\Sigma|})\right)$ are in $\mathbb{N}[q^{\frac{1}{2}},q^{-\frac{1}{2}}]$. 
\item[4)] Every quantum theta function in $\Theta(\mathcal{O}_q(\mathscr{X}_{|\Sigma|}))$ is self-adjoint, i.e., invariant under the anti-involution $\ast$ of $\mathcal{O}_q(\mathscr{X}_{|\Sigma|})$.
\item[5)] There is natural $\mathcal{G}_{|\Sigma|}$-equivarient bijection
\be
\label{dual.i}
\mathscr{A}_{|\Sigma|}(\mathbb{Z}^t)\stackrel{\sim}{\longrightarrow}\Theta\left(\mathcal{O}_q(\mathscr{X}_{|\Sigma|})\right), \qquad l \longmapsto \theta_l.
\ee
\item[6)] Let $(Q', \{A_i'\})$ be an arbitrary seed of $\mathscr{A}_{|\Sigma|}$. For every $l\in 
\mathscr{A}_{|\Sigma|}(\mathbb{Z}^t)$, we have
\be
\la{sann}
\theta_l = X'_{a(l)} + \sum_{v>a(l)} c_{l,v} X_v',
\ee
where $c_{l,v}\in \mathbb{N}[q^{\frac{1}{2}},q^{-\frac{1}{2}}]$.
\end{itemize}
\et

In the rest of the Section, we explore further properties of the quantum duality for future use. 
By 3) of Theorem \ref{quantum.cluster.dual}, for any  ${l_1}, {l_2} \in \mathscr{A}_{|\Sigma|}(\mathbb{Z}^t)$, we have a finite linear decomposition
\[
\theta_{l_1} \cdot \theta_{l_2} = \sum_{l\in \mathscr{A}_{|\Sigma|}(\mathbb{Z}^t)} c_q(l_1,l_2;l) \theta_{l},
\]
where  $c_q(l_1,l_2;l)\in \mathbb{N}[q^{\frac{1}{2}}, q^{-\frac{1}{2}}]$. 
Following the construction of \cite{Sh}, we define the support 
\be
\la{adcx}
{\rm Supp}(l_1, l_2):=\left\{l\in \mathscr{A}_{|\Sigma|}(\mathbb{Z}^t) ~\middle|~ c_q(l_1,l_2;l)\neq 0\right\}. 
\ee

\bl
\la{support.lemma}
If $l\in {\rm Supp}(l_1, l_2)$, then for any cluster $K_2$ variable $A$ of $\mathscr{A}_{|\Sigma|}$ we have
\be
\la{support.lemma,a}
A^t(l) \geq A^t(l_1) + A^t(l_2).
\ee
In particular, when $A$ is frozen, then the formula \eqref{support.lemma,a} achieves equality. 
\el
\begin{proof} Let us fix a seed whose  cluster chart contains the variable $A$. By  \eqref{sann}, we have
\[
\theta_{l_1}\cdot \theta_{l_2} =\left(X_{a(l_1)}'+\sum_{v>a(l_1)} c_{l_1, v}X_v' \right)\left(X_{a(l_2)}'+\sum_{u>a(l_2)} c_{l_2, u}X_v' \right)= \sum_{l\in \mathscr{A}_{|\Sigma|}(\mathbb{Z}^t)} c_q(l_1,l_2;l) \theta_{l}.
\]
Here $c_{l_1,v}$, $c_{l_2, u}$, $c_q(l_1,l_2;l)$ are in $\mathbb{N}[q^{\frac{1}{2}},q^{-\frac{1}{2}}]$. By comparing them, we get $a(l) \geq a(l_1)+ a(l_2)$, and the equality holds when $A$ is frozen. The Lemma follows.
\end{proof}

\begin{lemma}
Let $A_i$ be a frozen cluster variable. There is a unique $l_i\in \mathscr{A}_{\Sigma}(\mathbb{Z}^t)$ such that $l_i(A_i)=1$ and $l_i(A)=0$ for all the other cluster $K_2$ variables. Through the bijection \eqref{dual.i}, every frozen vertex $i\in V_0$ gives rise to a quantized theta function
\be
\label{frozen-theta-function}
\theta_{l_i} \in \Theta\left(\mathcal{O}_q(\mathscr{X}_{|\Sigma|})\right).
\ee
\end{lemma}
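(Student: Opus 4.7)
The strategy is to construct $l_i$ explicitly via its values on the cluster chart of the initial seed $\Sigma$, and then verify that this prescription extends to a well-defined tropical point vanishing on every other cluster variable.

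For uniqueness, any tropical point $l \in \mathscr{A}_{|\Sigma|}(\mathbb{Z}^t)$ is determined by its values on a single cluster chart, since the tropicalized transition maps then prescribe its values on every other chart. Hence at most one tropical point can satisfy $l_i(A_i) = 1$ together with $l_i(A_j) = 0$ for every other $j \in V$.

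For existence, define $l_i$ by these values on the initial seed $(Q,\{A_j\}_{j\in V})$. I would show by induction on the length of a mutation sequence that the vanishing property propagates. Tropicalizing the mutation formula \eqref{k2mutation} at an unfrozen vertex $k \in V - V_0$ gives
\[
l_i(A_k') \;=\; -l_i(A_k) \;+\; \min\!\left(\sum_{j:\,\varepsilon_{kj} \geq 0} \varepsilon_{kj}\, l_i(A_j),\ \sum_{j:\,\varepsilon_{kj} \leq 0} (-\varepsilon_{kj})\, l_i(A_j)\right).
\]
Under the inductive hypothesis that $l_i$ vanishes on every cluster variable of the current seed except $A_i$, and that $l_i(A_i) = 1$, the two sums on the right collapse to $[\varepsilon_{ki}]_+$ and $[-\varepsilon_{ki}]_+$, respectively. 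At least one of these is zero, and $l_i(A_k) = 0$ since $k \neq i$; hence $l_i(A_k') = 0$. All other cluster variables in $\mu_k(\Sigma)$ agree with their counterparts in $\Sigma$, so the hypothesis is preserved after the mutation and the induction continues along any mutation sequence.

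Once $l_i$ is established as a bona fide tropical point, the $\mathcal{G}_{|\Sigma|}$-equivariant bijection \eqref{dual.i} of Theorem \ref{quantum.cluster.dual} immediately produces the quantum theta function $\theta_{l_i} \in \Theta(\mathcal{O}_q(\mathscr{X}_{|\Sigma|}))$. There is no serious obstacle here; the only step that merits care is the inductive verification above, which ultimately reduces to the elementary observation that $\min([n]_+, [-n]_+) = 0$ for every $n \in \mathbb{Z}$.
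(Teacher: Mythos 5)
Your proposal is correct and follows essentially the same route as the paper: both arguments fix the defining values $l_i(A_j)=\delta_{ij}$ on one chart, tropicalize the mutation rule \eqref{k2mutation}, and observe that since $A_i$ contributes to at most one of the two sums inside the $\min$, the minimum vanishes and the zero values propagate by induction along any mutation sequence. Your explicit remark on uniqueness (a tropical point is determined by its values on a single chart) is a minor addition the paper leaves implicit, but the substance is identical.
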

\begin{proof} Let us start with an arbitrary  cluster chart $\{A_j\}$. There is a unique tropical point $l_i$ such that $l_i(A_j)=\delta_{ij}$.  Let us mutate the seed in the direction $k$. By the tropicalization of \eqref{k2mutation}, we get
\[
l_i(A_k')= \min\left\{\sum_{j|\varepsilon_{kj}\geq 0} \varepsilon_{kj}l_i(A_j), \sum_{j~|~\varepsilon_{kj}< 0}-\varepsilon_{kj}l_i(A_j)\right\} - l_i(A_k).
\]
Note that $l_i(A_i)=1$ appears at most in one sum inside $\min$ and $l_i(A_k)=0$. Therefore $l_i(A_k')=0$. By induction, we show that $l_i(A)=0$ for all the cluster variables $A\neq A_i$.
\end{proof}

\smallskip

Let $\alpha=\{a_i\}_{i\in V_0}$ be a tuple of integers indexed by the frozen vertices. Define the subset of tropical points
\[
\mathscr{A}_{|\Sigma|}(\mathbb{Z}^t)_{\alpha}:=\left\{ l\in \mathscr{A}_{|\Sigma|}(\mathbb{Z})~\middle | ~ A_i^t(l)=a_i, ~\forall i\in V_0\right\}.
\]
Correspondingly, we consider the $\mathbb{L}$-linear span
\[
\mathcal{O}_q(\mathscr{X}_{|\Sigma|})_{\alpha}:= \bigoplus_{l\in \mathscr{A}_{|\Sigma|}(\mathbb{Z}^t)_{\alpha}} \mathbb{L} \theta_l.
\]

\bl \label{decomp.quant.cl}
The decomposition
\be
\mathcal{O}_q(\mathscr{X}_{|\Sigma|}):= \bigoplus_{\alpha \in \mathbb{Z}^{\# V_0}} \mathcal{O}_q(\mathscr{X}_{|\Sigma|})_{\alpha},
\ee
makes the algebra $\mathcal{O}_q(\mathscr{X}_{|\Sigma|})$ a $\mathbb{Z}^{\# V_0}$ graded algebra.
\el
\begin{proof} It follows directly from Lemma \ref{support.lemma} when $A_i$ is frozen. 
\end{proof}

Let ${\bf Ca}$ be a subset of $\mathscr{A}_{|\Sigma|}(\mathbb{Z}^t)$  consisting of  tropical points $c$ whose tropical coordinates $\{A_i^t(c)\}$ for one (and therefore every) cluster chart $\{A_i\}$ satisfy 
\[
\sum_{i\in V}A_i^t(c)\varepsilon_{ij}=0, \qquad \forall j\in V.
\]
Note that every $c\in {\bf Ca}$ correponds to a Casimir element $\theta_c$.

Let $c\in {\bf Ca}$ and let $l\in \mathscr{A}_{|\Sigma|}(\mathbb{Z}^t)$. An easy recursive check shows that there is a unique tropical point, denoted by $l+c$, such that 
\[
A^t(l+c)=A^t(l)+A^t(c)
\]
for every cluster variable $A$.
\bl 
\label{lemma.case12}
We have
\[
\theta_{l+c}=\theta_l\cdot \theta_c, \qquad \forall l\in \mathscr{A}_{|\Sigma|}(\mathbb{Z}^t), ~\forall c\in {\bf Ca}.
\]
\el
\begin{proof} By 3) of Theorem \ref{quantum.cluster.dual}, we have
\be
\label{eq1.35}
\theta_{l}\cdot \theta_c =\sum_{t} c_q(l,c;t) \theta_t,
\ee
where $c_q(l,c;t)\in \mathbb{N}[q^{\frac{1}{2}}, q^{-\frac{1}{2}}]$. In particular, by a direct comparison of the leading coefficients, we have $c_q(l,c;l+c)=1$. Note that $\theta_{-c}=\theta_c^{-1}$ is a Casimir. Therefore, 
\[
\theta_l= \sum_t c_q(l,c;t)\theta_t\theta_{-c}=\sum_{t,s}c_q(l,c;t)c_q(t,-c;s)\theta_s.
\]
Due to the non-negativity of the coefficients, we see that there is only one nontrivial term in \eqref{eq1.35}. Hence $\theta_l\cdot \theta_c=\theta_{l+c}$.
\end{proof}
Let $C=\{c_1,\ldots, c_r\} \in {\bf Ca}$. We define an $\mathbb{Z}^r$ action on the set $\mathscr{A}_{|\Sigma|}(\Z^t)$ such that
\be
\label{ascjna}
r_a(l):= l+\sum_{i=1}^r a_ic_i, \qquad \forall l\in \mathscr{A}_{|\Sigma|}(\mathbb{Z}^t),~ \forall a=(a_1,\ldots,a_r) \in \mathbb{Z}^r.
\ee
Denote by ${\rm Or}_{C}(\mathscr{A}_{|\Sigma|}(\mathbb{Z}^t))$ the set of $\mathbb{Z}^r$-orbits inside $\mathscr{A}_{|\Sigma|}(\mathbb{Z}^t)$ under the above action. Let $\mathcal{I}_C$ be an ideal of $\mathcal{O}_q(\mathscr{X}_{|\Sigma|})(\mathbb{Z}^t)$ generated by 
\[
\theta_{c_i}-1, \qquad \forall c_i\in C.
\]

The following proposition is a direct consequence of Lemma \ref{lemma.case12}.
\begin{proposition}
The quantum theta basis of $\mathscr{O}_q(\mathscr{X}_{|\Sigma|})$ descends to a natural $\mathbb{L}$-linear basis of the quotient algebra \[\mathscr{O}_q(\mathscr{X}_{|\Sigma|}){\big \slash} \mathcal{I}_C,\] 
naturally parametrized by the set ${\rm Or}_{C}(\mathscr{A}_{|\Sigma|}(\mathbb{Z}^t))$.
\end{proposition}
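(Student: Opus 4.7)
The plan is to identify $\mathcal{O}_q(\mathscr{X}_{|\Sigma|})$ as a free module over the central subalgebra generated by the Casimir theta functions $\theta_{c_i}$, and then perform the base change along $\theta_{c_i}\mapsto 1$.

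First, I iterate Lemma \ref{lemma.case12}: since each $\theta_{c_i}$ is Casimir (hence central and invertible with $\theta_{c_i}^{-1}=\theta_{-c_i}$), an induction on $\sum|a_i|$ gives, for every $l\in\mathscr{A}_{|\Sigma|}(\mathbb{Z}^t)$ and $a=(a_1,\ldots,a_r)\in\mathbb{Z}^r$,
\[
\theta_{r_a(l)}\;=\;\theta_l\cdot\theta_{c_1}^{a_1}\cdots\theta_{c_r}^{a_r}.
\]
Reducing modulo $\mathcal{I}_C$, where each $\theta_{c_i}\equiv 1$, this shows that all theta functions in a single $\mathbb{Z}^r$-orbit $O\in{\rm Or}_C(\mathscr{A}_{|\Sigma|}(\mathbb{Z}^t))$ project to one and the same class. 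Fixing any choice of orbit representatives $\{l_O\in O\}_{O\in{\rm Or}_C}$, the images $\{\overline{\theta_{l_O}}\}$ therefore already span the quotient over $\mathbb{L}$.

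For linear independence I would introduce the central subalgebra $R\subset\mathcal{O}_q(\mathscr{X}_{|\Sigma|})$ generated by the $\theta_{c_i}^{\pm 1}$. The displayed identity above, together with the fact that distinct orbit elements correspond to distinct (hence $\mathbb{L}$-linearly independent) theta basis elements, refines the theta-basis decomposition $\mathcal{O}_q(\mathscr{X}_{|\Sigma|})=\bigoplus_l\mathbb{L}\theta_l$ into a free $R$-module decomposition
\[
\mathcal{O}_q(\mathscr{X}_{|\Sigma|})\;=\;\bigoplus_{O\in{\rm Or}_C}R\cdot\theta_{l_O}.
\]
Because the $\theta_{c_i}$ are central, the two-sided ideal $\mathcal{I}_C$ coincides with $J\cdot\mathcal{O}_q(\mathscr{X}_{|\Sigma|})$ for $J=(\theta_{c_i}-1)_i\subset R$. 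The evaluation $R\twoheadrightarrow R/J\cong\mathbb{L}$ sending $\theta_{c_i}\mapsto 1$, combined with base change of the free decomposition, gives
\[
\mathcal{O}_q(\mathscr{X}_{|\Sigma|})\big/\mathcal{I}_C \;\cong\; \mathcal{O}_q(\mathscr{X}_{|\Sigma|})\otimes_R\mathbb{L} \;\cong\; \bigoplus_{O\in{\rm Or}_C}\mathbb{L}\cdot\overline{\theta_{l_O}},
\]
which is the sought basis.

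The subtlety to watch is the precise structure of $R$. If the Casimirs $c_1,\ldots,c_r$ are $\mathbb{Z}$-linearly independent in $\mathscr{A}_{|\Sigma|}(\mathbb{Z}^t)$, then $R$ is a Laurent polynomial ring in $r$ variables and the argument is clean. In the degenerate case, any relation $\sum a_ic_i=0$ among the tropical points forces $\prod\theta_{c_i}^{a_i}=\theta_0=1$ compatibly, so $R$ is the group algebra of $\mathbb{Z}^r/L$ where $L$ is the relation lattice; the $\mathbb{Z}^r$-action on tropical points factors through the same quotient, and both the orbit indexing and the isomorphism $R/J\cong\mathbb{L}$ are unaffected. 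Thus the freeness and the conclusion persist in full generality, and the principal work of the proof is really the bookkeeping that converts Lemma \ref{lemma.case12} into the $R$-module statement.
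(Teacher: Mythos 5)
Your proof is correct and follows the route the paper intends: the paper offers no argument beyond declaring the proposition ``a direct consequence of Lemma \ref{lemma.case12},'' and your iteration of that lemma into the identity $\theta_{r_a(l)}=\theta_l\prod_i\theta_{c_i}^{a_i}$, the resulting free rank-one decomposition $\mathcal{O}_q(\mathscr{X}_{|\Sigma|})=\bigoplus_{O}R\cdot\theta_{l_O}$ over the central Laurent subalgebra $R$, and the base change along $R\twoheadrightarrow R/J\cong\mathbb{L}$ are exactly the bookkeeping needed to justify both spanning and linear independence in the quotient. Your remark on the degenerate case (relations among the $c_i$ forcing $R$ to be the group algebra of $\mathbb{Z}^r/L$, with the orbit indexing unaffected) correctly handles a point the paper does not address.
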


\section{Decorated character variety}
Let $\G$ be a split semi-simple algebraic group over $\mathbb{Q}$ with trivial center. Let $\tilde{\G}$ be the universal cover of $\G$. A decorated surface $\bS$ is an oriented topological surface with {\it punctures} inside, and a finite number of {\it marked points} on its boundary.  The moduli space of $\G$-local systems over $\bS$ (a.k.a.  the {\it character variety}) is
\[\mathscr{L}_{\G,\bS}={\bf Hom}\left(\pi_1(\bS),\G\right)\Big/\G.\]

In their seminal work \cite{FGteich}, Fock and Goncharov introduce a pair  $(\mathscr{X}_{\G, \bS}, \mathscr{A}_{\tilde{\G}, \bS})$ of moduli spaces, which are varients of $\mathscr{L}_{\G,\bS}$ by adding decorations of flags on punctures and the boundary marked points of $\bS$. These spaces have found significant applications in representation theory \cite{GS1, GS3}, higher Teichm\"uller theory \cite{FGteich}, Donaldson-Thomas theory \cite{GS2}, etc. One fundamental problem is constructing intrinsic cluster structures on the aforementioned pair of  moduli spaces, which have been achieved by Fock and Goncharov \cite{FGteich} for $\G={\rm PGL}_{m}$, and by Le \cite{Le} for classical groups based on a case-by-case study. The paper \cite{GS3} presents a universal construction of the cluster structures for all semisimple groups and solves the problem in full generality.

The paper \cite{GS3} further introduces a  moduli space $\mathscr{P}_{\G, \bS}$, which extends the space $\mathscr{X}_{\G, \bS}$ 
by adding extra data called pinnings. The pair $(\mathscr{P}_{\G, \bS}, \mathscr{A}_{\tilde{\G}, \bS})$ form a cluster ensemble under the framework of Fock and Goncharov \cite{FGensemble}. In particular, the coordinate ring of  $\mathscr{P}_{\G, \bS}$ is naturally isomorphic to a cluster Poisson algebra (\cite[Theorem 1]{Sh2}), and hence admits a natural cluster quantization $\mathcal{O}_q(\mathscr{P}_{\G, \bS})$.

In this Section, we briefly recall the definitions and several properties of $\mathscr{P}_{\G,\bS}$ and $\mathscr{A}_{\tilde{\G},\bS}$, mainly following \cite{GS3}.

\subsection{The moduli space $\mathscr{P}_{\G, \bS}$}
Let $\B$ and $\B'$ be a pair of Borel subgroups of $\G$ such that $\B\cap \B'$ is abelian. In this case, we say $\B$ and $\B'$ are of {\it generic position}. Correspondingly, we obtain a decomposition for the Lie algebra 
\be
\label{cartan.decom}
\mathfrak{g}=Lie \G = \mathfrak{u}\oplus \mathfrak{h}\oplus \mathfrak{u}^-,
\ee
such that $Lie \B= \mathfrak{u}\oplus \mathfrak{h}$ and $Lie \B'=\mathfrak{h}\oplus \mathfrak{u}^-$. A {\it pinning} $\pi$ over the pair $(\B, \B')$ is equivalent to a choice of Chevalley basis that is compatible with the decomposition \eqref{cartan.decom}.

Let $\mathcal{B}$ be the flag variety  that parametrizes the Borel subgroups of $\G$. Let $\mathcal{L}\in {\bf Hom}(\pi_1(\bS), \G)$ be a $\G$-local system over $\bS$. Consider the associated bundle $\mathcal{L}_\mathcal{B}:= \mathcal{L}\times_\G \mathcal{B}$. The marked boundary points separate the boundary $\partial \bS$ into disconnected intervals. 

\bd The moduli space $\mathscr{P}_{\G, \bS}$ parametrizes the data $(\mathcal{L}, \{\B_x\}, \{\B_p\}, \{\pi_e\})$, modulo the conjugation of $\G$, where
\begin{itemize}
\item $\mathcal{L}\in {\rm Hom}(\pi_1(\bS), \G)$ is a $\G$-local system;
\item for every marked boundary point $x$, $\B_x$ is section of $\mathcal{L}_\mathcal{B}$ over $x$;
\item for every puncture $p$, $\B_p$ is a flat section of $\mathcal{L}_\mathcal{B}$ over the circle $c_p$ surrounding $p$;
\item for every boundary interval $e$ with endpoints $a$ and $b$, the associated pair $(\B_a,\B_b)$ is generic, and $\pi_e$ is a pinning over $(\B_a,\B_b)$.
\end{itemize}
\ed

As shown in \cite[\S 13]{GS3}, every puncture $p$ corresponds to a birational Weyl group action on $\mathscr{P}_{\G, \bS}$. 
More concretely, let $m_p$ be the monodromy of a $\G$-local system $\mathcal{L}$ surrounding the puncture $p$. 
Generically, the flags invariant under the monodromy $m_p$ form a Weyl group torsor. The Weyl group acts on $\mathscr{P}_{\G, \bS}$ by alternating the flat section $\B_p$ and keeping the rest invariant.  

\subsection{The moduli space $\mathscr{A}_{\tilde{\G},\bS}$}
Consider the fiber bundle $T'\bS=T\bS\backslash \{\mbox{0-section}\}$ with fiber $\mathbb{R}^2\backslash \{0\}$, obtained from the tangent bundle $T\bS$ by deleting the 0-section. The projection from $T'\bS$ to $\bS$ induces an exact sequence
\[
0\longrightarrow \pi_1(\mathbb{R}^2\backslash\{0\}) \longrightarrow \pi_1(T'\bS) \longrightarrow \pi_1(\bS) \longrightarrow 0.
\]
The image of the generator of $\pi_1(\mathbb{R}^2\backslash\{0\})=\mathbb{Z}$ in $\pi_1(T'\bS)$ is denoted by $\mathfrak{o}$. It is known that $\mathfrak{o}$ belongs to the center of $\pi_1(T'\bS)$.

\smallskip 

There is a natural set-theoretic lift $w\mapsto \overline{w}$ from the Weyl group $W$ to $\tilde{\G}$ (cf. \cite[\S 2]{FGteich}). Set $s_{\tilde{\G}}:=\overline{w}_0^2$, where $w_0$ is the longest element in the Weyl group. The element $s_{\tilde{\G}}$ belongs to the center of $\tilde{\G}$, and $s_{\tilde{\G}}^2=1$.
A {\it twisted local system} $\mathcal{L}$ is an element of ${\bf Hom}(\pi_1(T'\bS), \tilde{\G})$ such that $\mathcal{L}(\mathfrak{o})=s_{\widetilde{\G}}$.
Every oriented loop $c_p\subset \bS$ surrounding a puncture $p$ can be uniquely lift to an oriented loop $[c_p]\subset T'\bS$. We say a twisted local system $\mathcal{L}$ is {\it unipotent} if the monodromy \[u_p:=\mathcal{L}([c_p])\] is a unipotent element in $\tilde{\G}$.

\smallskip 

Consider the decorated flag variety $\mathcal{A}:=\tilde{\G}/\U$, where $\U$ is a fixed maximal unipotent subgroup of $\tilde{\G}$. Let $\mathcal{L}_{\mathcal{A}}= \mathcal{L}\times_{\tilde{\G}}\mathcal{A}$ be its corresponding associated bundle. 
For every boundary marked point $x$, we fix a point $x'$ on the fiber $T'_x\bS$. A decoration of a twisted unipotent local system $\mathcal{L}$ is an assignment of a flat section $\A_x$ of $\mathcal{L}_\mathcal{A}$ over every $x'$, and a flat section $\A_p$ over $[c_p]$ for every puncture $p$. In particular, $\A_p$ is invariant  under the monodromy $u_p$.

\bd
The moduli space $\mathscr{A}_{\tilde{\G}, \bS}$ parametrizes the data $(\mathcal{L}, \{\A_x\}\cup \{\A_p\})$, modulo the conjugation action of $\tilde{\G}$, where $\mathcal{L}$ is a twisted unipotent local system, and  $\{\A_x\}\cup \{\A_p\}$ is a decoration of $\mathcal{L}$.
\ed

\begin{example}  Let $t$ be a triangle, i.e., a disk with three marked points on its boundary. The space $\mathscr{A}_{\tilde{\G}, t}$ is isomorphic to the configuration space ${\rm Conf}_3(\mathcal{A}):= \tilde{\G}{\Big \backslash} \mathcal{A}^3$.

Let $\tilde{\H}$ be a Cartan subgroup of $\tilde{\G}$ such that $\U\tilde{\H}$ forms a Borel subgroup. There is a natural birational map 
\[
\U\times \tilde{\H} \times \tilde{\H}\longrightarrow \mathscr{A}_{\G',t}
\]
\[
(u, h_1, h_2) \longmapsto (\U, uh_1\overline{w}_0\U, h_2\overline{w}_0\U).
\]
As illustrated by Figure \ref{eainvariants}, we refer $u$ as the {\it angle invariant} associated with the angle of the triangle $t$ near $\A_1$, and refer $h_1$ and $h_2$ as the the {\it edge invariants} associated with the oriented edges indicated by the Figure.

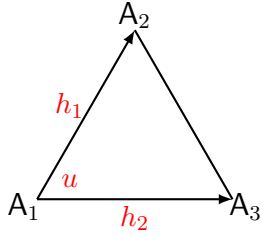
\begin{figure}[ht]
\begin{tikzpicture}
\draw[thick] (90:1.5) -- (330:1.5);
\draw[thick, ->,>=latex] (210:1.5) -- (330:1.5);
\draw[thick,->, >=latex] (210:1.5)  -- (90:1.5);
\node at (90:1.7) {$\A_2$};
\node at (210:1.7) {$\A_1$};
\node at (330: 1.7) {$\A_3$};
\node[red] at (150:1) {{\small $h_1$}};
\node[red] at (270:1) {{\small $h_2$}};
\node[red] at (210:1) {{\small $u$}};
\end{tikzpicture}
\caption{The edge and angle invariants for $\mathscr{A}_{\tilde{\G},t}$.}
\label{eainvariants}
\end{figure}

For general surface $\bS$, let $\mathcal{T}$ be an ideal triangulation of $\bS$, i.e., a triangulation whose vertices are the marked boundary points and punctures. By restriction to every triangle $t\in \mathcal{T}$, we get a projection
\[
{\rm Proj}_t: ~ \mathscr{A}_{\tilde{\G}, \bS} \longrightarrow \mathscr{A}_{\widetilde{\G},t}.
\]
Correspondingly, we get the angle and edge invariants of $\mathscr{A}_{\tilde{\G}, \bS}$ associated with $t$.
\end{example}

Let $p$ be a puncture of $\bS$. Let $\U_{\A_p}$ be the unipotent subgroup that stabilizes $\A_p$. Conversely, the choice of the decoration $\A_p$ determines a map 
\[
\U_{\A_p} \longrightarrow \U_{\A_p}/[\U_{\A_p}, \U_{\A_p}] \stackrel{\sim}{\lra}\mathbb{A}^r.
\]
\[
u \longmapsto (\chi_1(u), \ldots, \chi_r(u)),
\]
where $r$ is the rank of $\tilde{\G}$.
Note that the monodromy $u_p$ is in $\U_{\A_p}$. We set $\mathcal{W}_{p,i}=\chi_i(u_p)$. Following \cite{GS1}, we obtain  a 
{\it potential} function 
\be
\label{potenial.A}
\mathcal{W}_p=\sum_{i=1}^r \mathcal{W}_{p,i}:~ \mathscr{A}_{\tilde{\G}, \bS} \longrightarrow \mathbb{A},
\ee
constructed as follows. 

\smallskip

Following \cite[\S 6]{GS2}, for every puncture $p$, there is a birational Weyl group action on $\mathscr{A}_{\tilde{\G}, \bS}$. In details, let $\alpha_i^\vee$ be a simple positive coroot. The simple reflection $s_i$ acts on $\mathscr{A}_{\tilde{\G}, \bS}$ by rescaling the flat section $\A_p$ by $\alpha_i^\vee (\mathcal{W}_{p,i})$ and keeping the rest data invariant.

\subsection{Cluster structures} One of the main results of \cite{GS3} shows that the pair $(\mathscr{P}_{\G, \bS}, \mathscr{A}_{\tilde{\G}, \bS})$ form a cluster ensemble. Below we summarize several features of the cluster structures for future use. We refer the readers to {\it loc.cit.} for concrete constructions. 

\begin{example}
Let $t$ be a triangle. Let us pick a vertex $v$ of $t$ and a reduced word ${\bf s}$ of the longest Weyl group element $w_0$. There is a natural cluster $K_2$ (resp. Poisson) seed associated with $\mathscr{A}_{\widetilde{\G}, t}$ (resp. $\mathscr{P}_{\G, t}$). 

\smallskip
The cluster structure for $\mathscr{A}_{\widetilde{\G}, t}$ (resp. $\mathscr{P}_{\G, t}$) is closely related to the cluster structure for the double Bruhat cell $\tilde{\G}^{e,w_0}$ (resp. $\G^{e,w_0}$), studied by \cite{BFZ} (resp. \cite{FGamalgamation}). For instance, suppose $\G$ is of type $D_4$. Take the reduced word ${\bf s}=(1234)^3$, where $4$ corresponds to the vertex in the Dynkin diagram adjacent to all the rest three vertices. The black part of the quiver in Figure \ref{quiverd4} corresponds to a quiver for the double Bruhat cell $\G^{e, w_0}$. 
The word ${\bf s}$ corresponds to an order of the positive roots. The four extra red frozen vertices are attached to the black part, according to the position of the simple positive roots in the ordered sequence of positive roots. See \cite[\S 10.2]{GS3} for more details.

\begin{figure}[ht]
\begin{tikzpicture}
\begin{scope}[xshift=-3cm]
\draw[thick] (-2,1.5)--(0,2.5)--(-2,3.5);
\draw[red, thick] (-2,1.5)--(-2,3.5);
\node at (-.5,2.5) {${\bf s}$};
\end{scope}
\begin{scope}[>=latex,scale=1]
\rqstdarrow{-0.5,0.6}{-0.5,4.4};
\rqstdarrow{-0.55,0.5}{-1,3.45};
\rqstdarrow{-0.45,0.5}{0,5.4};
\rqarrow{4,4}{0,5.5};
\rqarrow{0,5.5}{4,3.1};
\rqarrow{0.5,4.75}{-0.5,4.5};
\rqarrow{-0.5,4.5}{0.5,3.75};
\rqarrow{0,4}{-1,3.5};
\rqarrow{-1,3.5}{0,3};
\rqarrow{2,1.9}{-0.5,0.5};
\rqarrow{-0.5,0.5}{2,0.95};
\foreach \i in{1,4}
{
\node at (0.5, \i+0.75) {$\square$};
\foreach \j in {0,2,4}
{\node at (\j,\i) {$\square$};}
}
\foreach \i in{2,3}
{
\node at (0.5, \i+0.75) {$\circ$};
\foreach \j in {0,2,4}
{\node at (\j,\i) {$\circ$};}
}
\foreach \i in{2,3,4}
{
\qarrow{0.5,\i-0.15}{0.5,\i+0.65};
\foreach \j in {0,2,4}
\qarrow{\j,\i-0.9}{\j,\i-0.1};
}
\foreach \i in{1,4}
{
\qdarrow{0.6,\i+0.7}{1.9,\i+0.05};
\qdarrow{0.1,\i}{1.9,\i};
\qdarrow{3.9,\i}{2.1,\i};
}
\foreach \i in{2,3}
{
\qarrow{0.6,\i+0.7}{1.9,\i+0.05};
\qarrow{0.1,\i}{1.9,\i};
\qarrow{3.9,\i}{2.1,\i};
}
\foreach \i in{2,3,4}
{
\qarrow{1.9,\i-0.05}{0.6, \i-0.25};
\qarrow{2.1,\i-0.1}{3.9,\i-0.9};
\qarrow{1.9,\i-0.1}{0.1,\i-0.9};
}
\node[red] at (-1,3.5) {$\square$};
\node[red] at (-0.5,4.5) {$\square$};
\node[red] at (-0.5,0.5) {$\square$};
\node[red] at (0,5.5) {$\square$};
\end{scope}
\end{tikzpicture}
\caption{A quiver associated with $(\mathscr{A}_{\widetilde{\G}, t}, \mathscr{P}_{{\G}, t})$, where $\G$ is of type $D_4$}
\label{quiverd4}
\end{figure}

For general $\G$, the corresponding quiver $Q_{v, {\bf s}}$ has $3r$ many frozen vertices, where $r$ is the rank of $\G$. The quiver is not necessarily planar. However, we can still place them on the top of the triangle $t$ such that each side of $t$ contains $r$ frozen vertices. In particular, as in Figure \ref{quiverd4}, the extra red frozen vertices are placed on the red side of the triangle.
\end{example}
\smallskip

For general surface $\bS$, let $\mathcal{T}$ be an ideal triangulation of $\bS$. For each triangle $t$ in $\mathcal{T}$, we pick a vertex $v_t$ and a reduced word ${\bf s}_t$. The data $\widetilde{\mathcal{T}}=(\mathcal{T}, \{(v_t, {\bf s}_t)\}$ is called a {\it decorated ideal triangulation}. 
We obtain a quiver $Q_{\widetilde{\mathcal{T}}}$ by amalgamating all the local quiver ${Q}_{v_t, {\bf s}_t}$ along the corresponding edges of the $t's$.

\begin{example} Let $\G$ be of type $A_3$. We fix a reduced word ${\bf s}=123121$. The corresponding quiver for a triangle is illustrated in Figure \ref{quivera3}. Note that this is exactly the quiver constructed by Fock and Goncharov for type $A$ cases. 
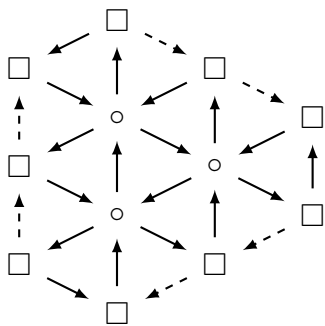
\begin{figure}[ht]
\begin{tikzpicture}[>=latex, scale=1.3]
\node at (0,0.5) (A1) {$\square$};
\node at (-1,1) (A2) {$\square$};
\node at (-2,1.5) (A3) {$\square$};
\node at (0,-0.5) (B1) {$\square$};
\node at (-1,-1) (B2) {$\square$};
\node at (-2,-1.5) (B3) {$\square$};
\node at (-1,0) (a-1) {$\circ$};
\node at (-2,0.5) (b-2) {$\circ$};
\node at (-2,-0.5) (c-2) {$\circ$};
\node at (-3,1) (d-3) {$\square$};
\node at (-3,0) (e-3) {$\square$};
\node at (-3,-1) (f-3) {$\square$};
\qarrow{A1}{a-1};
\qarrow{a-1}{B1};
\qarrow{A2}{b-2};
\qarrow{c-2}{b-2};
\qarrow{c-2}{B2};
\qarrow{A3}{d-3};
\qarrow{d-3}{b-2};
\qarrow{b-2}{e-3};
\qarrow{e-3}{c-2};
\qarrow{c-2}{f-3};
\qarrow{f-3}{B3};
\qarrow{b-2}{A3};
\qarrow{B3}{c-2};
\qarrow{a-1}{A2};
\qarrow{B2}{a-1};
\qarrow{b-2}{a-1};
\qarrow{a-1}{c-2};
\qarrow{B1}{A1};
\qdarrow{f-3}{e-3};
\qdarrow{e-3}{d-3};
\qdarrow{A3}{A2};
\qdarrow{A2}{A1};
\qdarrow{B1}{B2};
\qdarrow{B2}{B3};
\end{tikzpicture}
\caption{Quiver for $A_3$.}
\label{quivera3}
\end{figure}

Now let $\odot$ be a punctured disk with two marked points on its boundary. We pick a decorated triangulation as shown on the left graph of Figure \ref{odotquiver}. Let us place a copy of the quiver in Figure \ref{quivera3} on the top of the left triangle in $\odot$. We make another copy of the quiver, rotate it by $180^\circ$, and place it on the right triangle. We sure identify the frozen vertices on the same internal edges and make them mutable. Note that the arrows among those merged vertices are of opposite orientations and therefore get canceled. In the end, we obtain the quiver on the right hand side for $(\mathscr{A}_{SL_4, \odot}, \mathscr{P}_{PGL_4, \odot})$

\begin{figure}[ht]
\begin{tikzpicture}
\begin{scope}[scale=1.2]
\node at (-7,0) {$\bullet$};
\node at (-7, 1.5) {$\bullet$};
\node at (-7, -1.5) {$\bullet$};
\draw (-7, 0) circle (1.5cm);
\draw (-7, -1.5) -- (-7, 1.5);
\node at (-7.3,0) {${\bf s}$};
\node at (-6.7, 0) {${\bf s}$};
\end{scope}
\begin{scope}[>=latex,scale=1.2]
\node at (0,0.5) (A1) {$\circ$};
\node at (0,1.5) (A2) {$\circ$};
\node at (0,2.5) (A3) {$\circ$};
\node at (0,-0.5) (B1) {$\circ$};
\node at (0,-1.5) (B2) {$\circ$};
\node at (0,-2.5) (B3) {$\circ$};
\foreach \i in{-1,1}
{
\node at (\i,0) (a\i) {$\circ$};
}
\foreach \i in{-2,2}
{
\node at (\i,0.5) (b\i) {$\circ$};
\node at (\i,-0.5) (c\i) {$\circ$};
}
\foreach \i in{-3,3}
{
\node at (\i,1) (d\i) {$\square$};
\node at (\i,0) (e\i) {$\square$};
\node at (\i,-1) (f\i) {$\square$};
}
\node at (0,0.5) (A1) {$\circ$};
\node at (0,1.5) (A2) {$\circ$};
\node at (0,2.5) (A3) {$\circ$};
\node at (0,-0.5) (B1) {$\circ$};
\node at (0,-1.5) (B2) {$\circ$};
\node at (0,-2.5) (B3) {$\circ$};
\qarrow{A1}{a-1};
\qarrow{a-1}{B1};
\qarrow{B1}{a1};
\qarrow{a1}{A1};
\qarrow{A2}{b-2};
\qarrow{c-2}{b-2};
\qarrow{c-2}{B2};
\qarrow{B2}{c2};
\qarrow{b2}{c2};
\qarrow{b2}{A2};
\qarrow{A3}{d-3};
\qarrow{d-3}{b-2};
\qarrow{b-2}{e-3};
\qarrow{e-3}{c-2};
\qarrow{c-2}{f-3};
\qarrow{f-3}{B3};
\qarrow{B3}{f3};
\qarrow{f3}{c2};
\qarrow{c2}{e3};
\qarrow{e3}{b2};
\qarrow{b2}{d3};
\qarrow{d3}{A3};
\qarrow{b-2}{A3};
\qarrow{A3}{b2};
\qarrow{c2}{B3};
\qarrow{B3}{c-2};
\qarrow{a-1}{A2};
\qarrow{A2}{a1};
\qarrow{a1}{B2};
\qarrow{B2}{a-1};
\qarrow{b-2}{a-1};
\qarrow{a-1}{c-2};
\qarrow{c2}{a1};
\qarrow{a1}{b2};
\qdarrow{f-3}{e-3};
\qdarrow{e-3}{d-3};
\qdarrow{d3}{e3};
\qdarrow{e3}{f3};
\end{scope}
\end{tikzpicture}
\caption{A quiver for  $(\mathscr{A}_{SL_4, \odot},\mathscr{P}_{PGL_4, \odot})$}
\label{odotquiver}
\end{figure}
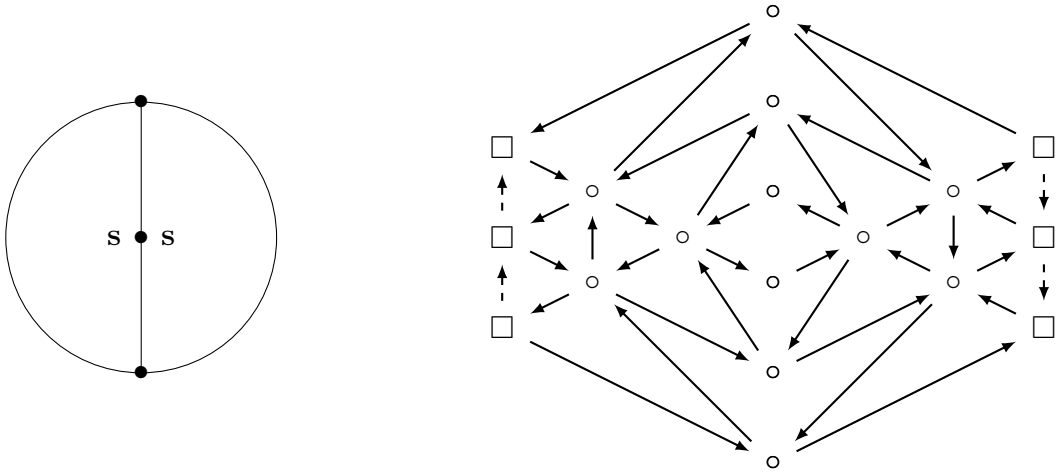
\end{example}

One may further assign coordinates to vertices of the quiver $Q_{\widetilde{\mathcal{T}}}$. Hence, each decorated triangulation $\mathcal{T}$ gives rise to a cluster $K_2$ seed for $\mathscr{A}_{\tilde{\G},\bS}$ and a cluster Poisson seed for $\mathscr{P}_{\G, \bS}$. By Theorem 5.11 of \cite{GS3}, the cluster seeds obtained from different decorated triangulations are mutation equivalent. Therefore, we obtain a natural cluster ensemble structure on $(\mathscr{P}_{\G, \bS}, \mathscr{A}_{\tilde{\G}, \bS})$.

\smallskip 
Let $\mathcal{O}_q(\mathscr{P}_{\G, \bS})$ be the Fock-Goncharov quantum cluster algebra associated with the underlying cluster structure of $\mathscr{P}_{\G, \bS}$.
Let $\G^L$ be the Langlands dual of $\G$. Barring a few exceptions, the paper \cite{GS2, GS3} show that the Donaldson Thomas transformation of $\mathscr{P}_{\G,\bS}$ is a cluster transformation. Combining with  Theorem \ref{quantum.cluster.dual}, we get the following result.

\bt Let us exclude surfaces $\bS$ with exactly one puncture and no boundaries. Let $\G$ be a simply-laced semisimple algebraic group over $\Q$ with trivial center.
The Fock-Goncharov quantum cluster algebra $\mathcal{O}_q(\mathscr{P}_{\G, \bS})$ has a quantum theta basis equivariently parametrized by the tropical points in $\mathscr{A}_{\G^L, \bS}(\mathbb{Z})$. The parametrization is equivarient under the following group actions:
\begin{itemize}
    \item the mapping class group of $\bS$,
    \item the outer group of $\G$,
    \item the product of Weyl groups over punctures of $\bS$,
    \item the product of braid groups over boundary circles of $\bS$.
\end{itemize}
\et 

\section{Cluster Realization of Quantum Groups}
In this Section, we focus on the cases when $\bS=\odot$ is a once punctured disk with two marked points. The group $\G$ is adjoint and simply-laced. We provide a rigid cluster model realizing the quantum group ${\bf U}_q(\g)$.

\subsection{The moduli space $\mathscr{P}_{\G, \odot}$}\la{PDOT}
Let us fix a pair $\B^+, \B^-$ of opposite Borel subgroups of $\G$. Let ${\rm H}=\B^+\cap \B^-$. We fix a pinning $\pi_{\rm std}$ over $(\B^+, \B^-)$ and refer it as the standard pinning. Equivalently, we obtain a Chevalley basis $\{e_{\alpha}, f_{\alpha}, h_{\alpha_i}\}$ for the Lie algebra $\mathfrak{g}=Lie \G$.

\smallskip 

Let ${\U}$ be the maximal unipotent subgroup inside $\B^+$. We have the decomposition $\B^+={\U}{\H}$. 
 For $1\leq i\leq r$, there are additive characters
\be
\label{additive.character}
\chi_i: ~{\U}\longrightarrow \mathbb{A}^1
\ee
such that $\chi_i({\rm exp}\, t e_{\alpha_j})=\delta_{ij}t$, where $\delta_{ij}$ is the Kronecker symbol.

\begin{figure}[ht]
\begin{tikzpicture}
\begin{scope}[scale=1]
\node at (0,0) {$\bullet$};
\node at (0, 1.5) {$\bullet$};
\node at (0, -1.5) {$\bullet$};
\draw (0, 0) circle (1.5cm);
\node at (-1.7,0) {$\pi$};
\node at (1.7, 0) {$\pi'$};
\draw[->,>=stealth',semithick, yshift=-3.5cm, red] (110:4cm) arc (110:70:4cm);
\draw[<-,>=stealth',semithick, yshift=3.5cm, red] (-110:4cm) arc (-110:-70:4cm);
\node[red] at (0,0.8) {{\tiny $b_1\overline{w}_0$}};
\node[red] at (0,-0.8) {{\tiny $b_2\overline{w}_0$}};
\node at (0.4,0) {{\tiny $\B_p$}};
\end{scope}
\end{tikzpicture}
\caption{The moduli space $\mathscr{P}_{\G, \odot}$}
\label{modulip}
\end{figure}
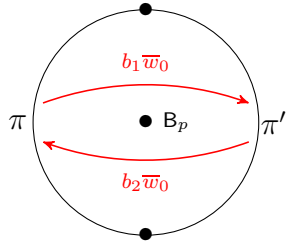

Recall the moduli space $\mathscr{P}_{\G, \odot}$.
As illustrated by Figure \ref{modulip}, there are pinnings $\pi$ and $\pi'$ associated with the two boundary intervals of $\odot$. The parallel transport $b_1\overline{w}_0$ takes $\pi$ to $\pi'$ along the top path, and the parallel transport $b_2\overline{w}_0$ takes $\pi'$ to $\pi$ along the bottom path, where $b_1, b_2\in \B^+$. The flat section associated with the puncture corresponds to a Borel subgroup $\B_p$ containing the element $b_1\overline{w}_0b_2\overline{w}_0$. In this way, we obtain an isomorphism
\[
\mathscr{P}_{\G, \odot}=\left\{(b_1, b_2, \B_p)\in \B^+\times \B^+\times \mathcal{B} ~\middle | ~b_1\overline{w}_0b_2\overline{w}_0\in \B_p \right\}.
\]

Let $b_{\epsilon}=u_{\epsilon}h_{\epsilon}\in {\U}{\H}$ for $\epsilon=1,2$. We get a set of regular functions $\{\mathcal{W}_{\epsilon, i}, \mathcal{K}_{\epsilon, i}\}_{1\leq i\leq r}$ defined via 
\[
\mathcal{W}_{\epsilon, i} =\chi_i(u_\epsilon),\hskip 12mm \mathcal{K}_{\epsilon,i}=\alpha_i(h_\epsilon).
\]
The functions $\mathcal{W}_{\epsilon, i} $ and $\mathcal{K}_{\epsilon,i}$ are global monomials (\cite[\S 15.3]{GS3}).

Recall the Weyl group action on $\mathscr{P}_{\G, \odot}$ that only alternates the Borel subgroup $\B_p$ and keeps the rest invariant. Therefore the functions $\mathcal{W}_{\epsilon, i} $ and $\mathcal{K}_{\epsilon,i}$ are invariant under the Weyl group action. By \cite{GS2,GS3}, the Weyl group action are cluster automorphisms of $\mathscr{P}_{\G, \odot}$. In other words, the Weyl group $W$ can be embedded into the (quasi-)cluster modular group $\mathcal{G}$ for $\mathscr{P}_{\G, \odot}$. As a consequence, the Weyl group $W$ acts on the algebra $\mathcal{O}_q(\mathscr{P}_{\G, \odot})$ and permutes its quantum theta basis.  Denote by 
$\mathcal{O}_q(\mathscr{P}_{\G, \odot})^W$ the $W$-invariant subalgebra of $\mathcal{O}_q(\mathscr{P}_{\G, \odot})$.
Following the quantum lift Theorem \cite[\S 18]{GS3}, the functions $\mathcal{W}_{\epsilon, i}$ and $\mathcal{K}_{\epsilon,i}$ can be uniquely promoted to quantum functions 
\be
\label{globalWK}
\mathbb{W}_{\epsilon, i}, ~\mathbb{K}_{\epsilon,i}\in \mathcal{O}_q(\mathscr{P}_{\G, \odot})^W, \qquad \epsilon=1,2, ~~~ 1\leq i\leq r.
\ee

\smallskip

Below we present a pure cluster interpretation of the quantized functions in \eqref{globalWK}. Recall the quantum duality map
\be
\la{bijections.a}
\mathscr{A}_{\G^L, \odot}(\mathbb{Z}^t)\stackrel{\sim}{\longrightarrow} \Theta(\mathcal{O}_q(\mathscr{P}_{\G,\odot})).
\ee
Recall the angle and edge invariants associated with $\mathscr{A}_{\G^L, \odot}$, as illustrated by Figure \ref{moduliA}. 
\begin{figure}[ht]
\begin{tikzpicture}
\begin{scope}[scale=1]
\node at (0, 1.5) {$\bullet$};
\node at (0, 1.75) {\footnotesize $\A_e$};
\node at (0, -1.5) {$\bullet$};
\node at (0, -1.75) {\footnotesize $\A_f$};
\draw[->,>=stealth',semithick, red] (95:0.5cm) arc (95:265:0.5cm);
\draw[<-,>=stealth',semithick, red] (91.5:1.5cm) arc (91.5:268:1.5cm);
\draw[<-,>=stealth',semithick, red] (85:0.5cm) arc (85:-85:0.5cm);
\draw[<-,>=stealth',semithick, red] (-88:1.5cm) arc (-88:88:1.5cm);
\draw[->,>=stealth',semithick, red] (0,0) -- (0,1.45);
\draw[->,>=stealth',semithick, red] (0,0) -- (0,-1.45);
\node at (-0.7,0) {{\tiny $u_1$}};
\node at (0.7,0) {{\tiny $u_2$}};
\node at (-.2,0.8) {{\tiny $h_1$}};
\node at (.2, -0.8) {{\tiny $h_2$}};
\node at (-1.7,0) {{\tiny $h_3$}};
\node at (1.7,0) {{\tiny $h_4$}};
\node at (0,0) {$\bullet$};
\node at (0.25, 0) {\footnotesize $\A_p$};
\end{scope}
\end{tikzpicture}
\caption{The edge and angle invariants for the moduli space $\mathscr{A}_{\G^L, \odot}$}
\label{moduliA}
\end{figure}
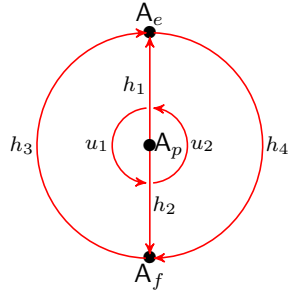

Let us fix a decorated triangulation $\widetilde{\mathcal{T}}$ for $\odot$, as the left graph of Figure \ref{odotquiver}. The corresponding quiver $Q_{\mathcal{T}}$ is placed on the top of $\odot$, where each edge of $\mathcal{T}$ contains $r$ vertices. The cluster $K_2$ variables of $\mathscr{A}_{\G^L, \odot}$ associated with these $4r$ vertices are shown on Figure \ref{Quiver.moduliA2}. 
In terms of the edge invariants, we have
\[
A_{e,i}=\omega_i(h_1^{-1}), \qquad A_{f,i}=\omega_i(h_2^{-1}), \qquad 
A_i=\omega_i(h_3^{-1}), \qquad A_{i}'=\omega_{i}(h_4^{-1}),
\]
where $\omega_i$  are the fundamental weights of $\G^L$. 

\begin{figure}[ht]
\begin{tikzpicture}
\begin{scope}[scale=2]
\node at (0, 1.5) {$\bullet$};
\node at (0, -1.5) {$\bullet$};
\draw (0, 0) circle (1.5cm);
\draw (0,0) -- (0,1.45);
\draw (0,0) -- (0,-1.45);
\node[red,thick] at (0,1.2) {{\small $\circ$}};
\node[red,thick] at (0,0.9) {{\small $\circ$}};
\node[red,thick] at (0.05,0.7) {{\small $\vdots$}};
\node[red,thick] at (0,0.3) {{\small $\circ$}};
\node[red,thick] at (0,-1.2) {{\small $\circ$}};
\node[red,thick] at (0,-0.9) {{\small $\circ$}};
\node[red,thick] at (0.05,-0.5) {{\small $\vdots$}};
\node[red,thick] at (0,-0.3) {{\small $\circ$}};
\node[red,thick] at (120:1.5) {{\tiny $\square$}};
\node[red,thick] at (150:1.5) {{\tiny $\square$}};
\node[red,thick] at (180:1.6) {{\tiny $\vdots$}};
\node[red,thick] at (-120:1.5) {{\tiny $\square$}};
\node[red,thick] at (-150:1.5) {{\tiny $\square$}};
\node[red,thick] at (120:-1.5) {{\tiny $\square$}};
\node[red,thick] at (150:-1.5) {{\tiny $\square$}};
\node[red,thick] at (180:-1.6) {{\tiny $\vdots$}};
\node[red,thick] at (-120:-1.5) {{\tiny $\square$}};
\node[red,thick] at (-150:-1.5) {{\tiny $\square$}};
\node at (0,0) {$\bullet$};
\node at (0.2,1.2) {{\small $A_{e,1}$}};
\node at (0.2,0.9) {{\small $A_{e,2}$}};
\node at (0.2,0.3) {{\small $A_{e,r}$}};
\node at (0.2,-1.2) {{\small $A_{f,1}$}};
\node at (0.2,-0.9) {{\small $A_{f,2}$}};
\node at (0.2,-0.3) {{\small $A_{f,r}$}};
\node at (120:1.7) {{\small $A_1$}};
\node at (150:1.7) {{\small $A_2$}};
\node at (-120:1.7) {{\small $A_r$}};
\node at (-150:1.7) {{\small $A_{r-1}$}};
\node at (120:-1.7) {{\small $A_1'$}};
\node at (150:-1.7) {{\small $A_2'$}};
\node at (-120:-1.7) {{\small $A_r'$}};
\node at (-150:-1.7) {{\small $A_{r-1}'$}};
\end{scope}
\end{tikzpicture}
\caption{A collection of cluster variables for the moduli space $\mathscr{A}_{\G^L, \odot}$}
\label{Quiver.moduliA}
\end{figure}
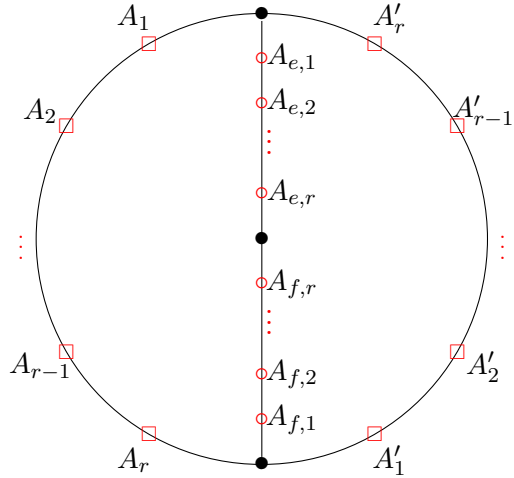

The variables $A_i, A_{i}'$ are placed on the boundary intervals and are frozen. By \eqref{frozen-theta-function}, there is a unique tropical point $l_i$ such that $l_i(A_i)=1$ and $l_i(A)=0$ for all the rest cluster variables. Similarly, there is a tropical point $l_i'$ for $A_{i}'$. 
By the quantum duality map \eqref{bijections.a}, we obtain the quantum theta functions 
\[\theta_{l_i}, ~\theta_{l_i'} \in \Theta(\mathcal{O}_q(\mathscr{P}_{\G,\odot})).\]

\smallskip

For $1\leq i \leq r$, define the positive functions
\be
\mathfrak{w}_i: ~{\H}^L \longrightarrow \mathbb{G}_m, \qquad h \longmapsto \omega_i(h^{-1}).
\ee
Their tropicalization defines bijection
\be
\label{tropisweight0}
\mathfrak{w}^t:~\H^L(\mathbb{Z}^t) \stackrel{\sim}{\longrightarrow}\mathbb{Z}^r,\qquad x \longmapsto \left(\mathfrak{w}_1^t(x),\ldots, \mathfrak{w}_r(x)\right).
\ee
We further identify ${\H}^L(\mathbb{Z}^t)$ with  the character lattice $X^\ast(\H)$ of ${\rm H}$:
\be
\label{tropisweight}
\H^L(\mathbb{Z}^t) \stackrel{\sim}{\longrightarrow} X^\ast(\H),\qquad x \longmapsto  \sum_{i=1}^r \mathfrak{w}_i^t(x) \alpha_i.
\ee
The invariants in Figure \ref{moduliA} define a positive birational map 
\[
\mathscr{A}_{\G^L, \odot}\longrightarrow \U^L\times \H^L\times \U^L \times \H^L
\]
\[
(\mathcal{L}, \{\A_e, \A_f\}\cup\{\A_p\}) \longmapsto (u_1, h_1, u_2, h_2)
\]
Its tropicalization combined with \eqref{tropisweight} yields a bijection
\be
\label{acnoanbit}
\tau:~\mathscr{A}_{\G^L, \odot}(\mathbb{Z}^t) \stackrel{\sim}{=} \U^L(\mathbb{Z}^t)\times X^*(\H)\times \U^L(\mathbb{Z}^t) \times X^*(\H).
\ee
Let $k_i, k_i'\in \mathscr{A}_{\G^L, \odot}(\mathbb{Z}^t)$ be the tropical points such that 
\[
\tau(k_i)=(0, \alpha_i, 0,0), \qquad \tau(k_i)=(0, 0, 0,\alpha_i).
\]
Correspondlingly, we get the quantum theta functions 
\[
\theta_{k_i}, \theta_{k_i'}\in \Theta\left(\mathcal{O}_q(\mathscr{P}_{\G,\odot})\right).
\]

\smallskip 

Recall the potential function $\mathcal{W}_{p}=\sum \mathcal{W}_{p,i}$ associated with the puncture $p$ of $\odot$. In terms of the angle invariants $u_1$ and $u_2$ in Figure \ref{moduliA} and the additive character in \eqref{additive.character}, we have
\be
\la{partial.potential}
\mathcal{W}_{p,i} =\chi_i(u_1) + \chi_i(u_2).
\ee
Therefore $\mathcal{W}_{p}$ is a positive function. We define 
\[
\mathscr{A}^+_{\G^L,\odot}(\mathbb{Z}^t):=\left\{ l ~\middle|~ \mathcal{W}_p^t(l)\geq 0\right\}.
\]
Consider the set 
\[
\U^L_+(\mathbb{Z}):= \left\{ l ~\middle|~\chi_i^t(l)\geq 0, ~\, \forall i \right\}.
\]
Tropical points in $\U^L_+(\mathbb{Z})$ are usually refered as Lusztig data in the literature. By definition, we have the following bijection
\be
\label{trop.c.a}
\tau:~\mathscr{A}_{\G^L, \odot}^+(\mathbb{Z}^t) \stackrel{\sim}{=} \U^L_+(\mathbb{Z}^t)\times \H^L(\mathbb{Z}^t)\times \U^L_+(\mathbb{Z}^t) \times \H^L(\mathbb{Z}^t)
\ee

\smallskip 

Recall the Weyl group $W$ acts on $\mathscr{A}_{\G^L, \odot}$ by rescaling the decorated flag $\A_p$. It is known that this Weyl group action gives cluster automorphism. Therefore, $W$ acts on the tropical set $\mathscr{A}_{\G^L,\odot}(\Z^t)$. Furthermore, the bijection \eqref{bijections.a} in equivarient under the Weyl group action. 

\begin{lemma}
Every $W$-orbit $c\subset \mathscr{A}_{\G^L, \odot}(\mathbb{Z}^t)$ contains a unique element $l \in \mathscr{A}^+_{\G^L,\odot}(\mathbb{Z}^t)$.
\end{lemma}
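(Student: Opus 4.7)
The $W$-action on $\mathscr{A}_{\G^L,\odot}$ is a cluster automorphism and so descends to the tropical set $\mathscr{A}_{\G^L,\odot}(\mathbb{Z}^t)$; using \eqref{acnoanbit} I will record each $l$ by its tuple $(u_1,h_1,u_2,h_2)$, remembering that the boundary edge invariants $h_3, h_4$ of Figure~\ref{moduliA} are frozen and hence $W$-fixed. The set $\mathscr{A}^+_{\G^L,\odot}(\mathbb{Z}^t)$ is cut out by $\chi_i^t(u_1)\ge 0$ and $\chi_i^t(u_2)\ge 0$ for all $i$. The plan is first to derive the tropical effect of each simple reflection, and then to use a defect-descent for existence and a length induction for uniqueness. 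Classically $s_i$ rescales $\A_p$ by $\alpha_i^\vee(\mathcal{W}_{p,i})$ with $\mathcal{W}_{p,i}=\chi_i(u_1)+\chi_i(u_2)$; tropicalizing the induced transformation (and using $\omega_j(\alpha_i^\vee(t))=t^{\delta_{ij}}$ in the simply-laced case), the reflection $s_i$ should leave every tropical coordinate untouched except the $i$-th ones, and on those act by a piecewise-linear involution whose crucial feature is that $\min(\chi_i^t(u_1),\chi_i^t(u_2))$ gets negated while the weights $\mathfrak{w}_i^t(h_k)$ are shifted accordingly. The precise formulas will follow from tropicalizing the cluster-automorphism realization of $s_i$ from \cite{GS2, GS3}.

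For existence, I define the defect
\[
D(l):=\sum_{i=1}^{r}\Bigl(\max\{0,-\chi_i^t(u_1(l))\}+\max\{0,-\chi_i^t(u_2(l))\}\Bigr)\in\mathbb{Z}_{\ge 0},
\]
so that $l\in\mathscr{A}^+$ iff $D(l)=0$. If $D(l)>0$, pick some $i$ with $\min(\chi_i^t(u_1(l)),\chi_i^t(u_2(l)))<0$; the sign-flip from the previous step then gives $D(s_i\cdot l)<D(l)$. Iterating this bounded descent terminates in $\mathscr{A}^+$.

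For uniqueness, assume $l'=w\cdot l$ with $l, l'\in\mathscr{A}^+$; I argue $l=l'$ by induction on $\ell(w)$. Writing a reduced expression $w=s_{i_1}\cdots s_{i_n}$, if $s_{i_n}\cdot l\in\mathscr{A}^+$ then the tropical formula forces $\min(\chi_{i_n}^t(u_1(l)),\chi_{i_n}^t(u_2(l)))=0$, which makes $s_{i_n}$ fix $l$ and reduces to the inductive hypothesis applied to $s_{i_1}\cdots s_{i_{n-1}}$. Otherwise $s_{i_n}\cdot l$ has strictly positive defect, and a descent-monotonicity argument mirroring existence shows that no further reduced product of simple reflections can bring the defect back to $0$, contradicting $l'\in\mathscr{A}^+$.

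The main technical obstacle is the first step: deriving the explicit tropical action of $s_i$ from the cluster-automorphism description of \cite{GS2, GS3}, in particular tracking how the rescaling of $\A_p$ by $\alpha_i^\vee(\mathcal{W}_{p,i})$ redistributes among all four coordinates $(u_1,h_1,u_2,h_2)$ and confirming the sign-flip of $\min(\chi_i^t(u_1),\chi_i^t(u_2))$. Once that is in place, both the existence and uniqueness arguments are structurally parallel to the classical fact that the dominant Weyl chamber is a fundamental domain for the $W$-action on a weight lattice.
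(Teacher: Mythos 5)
Your overall strategy (reduce the lemma to a chamber statement for a $W$-action read off from the potential) is workable, but two of your concrete steps fail. First, the conjectured tropical formula for $s_i$ is wrong in rank $\ge 2$: since $s_i$ rescales $\A_p$ by $\alpha_i^\vee(\mathcal{W}_{p,i})$ and $\alpha_j(\alpha_i^\vee(t))=t^{a_{ij}}$, the coordinates at every node $j$ adjacent to $i$ also move. Concretely, the tuple $\left(\mathcal{W}_{p,1}^t(l),\ldots,\mathcal{W}_{p,r}^t(l)\right)$ transforms by the standard reflection $s_i$ on the coweight lattice: $\mathcal{W}_{p,i}^t\mapsto-\mathcal{W}_{p,i}^t$ and $\mathcal{W}_{p,j}^t\mapsto\mathcal{W}_{p,j}^t+\mathcal{W}_{p,i}^t$ whenever $a_{ij}=-1$. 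This is exactly the content of the $W$-equivariant map $l\mapsto\sum_i\mathcal{W}_{p,i}^t(l)\,\omega_i$ to $X_*(\H)$ that the paper's proof relies on. Second, and as a consequence, your defect $D$ is not monotone under the proposed descent, so the existence argument breaks: in type $A_3$ take $l$ with $\chi_2^t(u_1(l))=\chi_2^t(u_2(l))=-1$ and all other $\chi_j^t(u_k(l))=0$; then $D(l)=2$ but $D(s_2\cdot l)=4$, because the gain of $2$ at node $2$ is outweighed by a loss of $1$ at each of the four coordinates of the two neighboring nodes. The same non-monotonicity undermines the claim in your uniqueness step that no further product of simple reflections can return the defect to $0$.

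Both problems disappear if you transport everything through the equivariant map to $X_*(\H)$, which is the paper's route: $\mathscr{A}^+_{\G^L,\odot}(\mathbb{Z}^t)$ is the preimage of the dominant cone $X_*^+(\H)$; existence follows from the standard fact that every $W$-orbit in the coweight lattice meets the dominant cone (e.g.\ maximize $\langle\rho,\cdot\rangle$ over the orbit --- this pairing strictly increases under $s_i$ whenever $\mathcal{W}_{p,i}^t(l)<0$, and is the monotone quantity your defect should be replaced by); and uniqueness follows because the stabilizer of a dominant coweight is generated by the simple reflections $s_i$ with $\mathcal{W}_{p,i}^t(l)=0$, each of which acts on $l$ by a tropically trivial rescaling of $\A_p$ and hence fixes $l$ --- this last observation is the one part of your sketch that does survive.
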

\begin{proof}
Recall that $\omega_1,\ldots, \omega_r$ are the fundamental coweights of $\G$. Following \cite[\S 6]{GS2}, there is a $W$-equivarient map from $\mathscr{A}_{\G^L, \odot}(\mathbb{Z}^t)$ to the coweight lattice $X_*(\H)$ of $\G$:
\[
\mathscr{A}_{\G^L, \odot}(\mathbb{Z}^t) \longrightarrow X_*(\H), \qquad l \longmapsto \sum_{i=1}^r \mathcal{W}_{p,i}^t(l)\omega_i.
\]
The set $\mathscr{A}_{\G^L, \odot}^+(\mathbb{Z}^t)$ is the preimage of the dominant chamber $X_*^+(\H)$, which concludes the proof of the Lemma.
\end{proof}

\bd For $l\in \mathscr{A}^+_{\G^L, \odot}(\mathbb{Z}^t)$, we define the {\it W-invariant quantum theta fucntion}
\be
\vartheta_l:=\sum_{l'\in W\cdot l}\theta_{l'} \in \mathcal{O}_q(\mathscr{P}_{\G,\odot})^W.
\ee
\ed
\begin{lemma}
\label{lemma12d}
The set $\left\{ \vartheta_l\right\}$ is a $\mathbb{L}$-linear basis of $\mathcal{O}_q(\mathscr{P}_{\G, \odot})^W.$
\end{lemma}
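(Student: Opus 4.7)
The plan is to use the fact that $W$ permutes the quantum theta basis $\{\theta_{l'}\}$ of $\mathcal{O}_q(\mathscr{P}_{\G,\odot})$ (a consequence of the $W$-equivariance of the bijection in \eqref{bijections.a}, together with the fact that $W$ acts on $\mathscr{P}_{\G,\odot}$ by cluster automorphisms, so that by property 1) of Theorem \ref{quantum.cluster.dual} it preserves the quantum theta basis). Once this is in hand, the lemma becomes a standard averaging argument made possible because $W$ is finite.

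First I would verify that each $\vartheta_l$ is a well-defined element of $\mathcal{O}_q(\mathscr{P}_{\G,\odot})^W$. Finiteness of the sum in the definition of $\vartheta_l$ is clear since the Weyl group $W$ is finite, so $\vartheta_l$ is a genuine element of $\mathcal{O}_q(\mathscr{P}_{\G,\odot})$. Applying any $w \in W$ just permutes the terms $\theta_{l'}$ for $l' \in W \cdot l$, so $w \cdot \vartheta_l = \vartheta_l$.

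Next I would prove linear independence. Suppose $\sum_{l \in \mathscr{A}^+_{\G^L,\odot}(\mathbb{Z}^t)} a_l \vartheta_l = 0$ with only finitely many $a_l$ nonzero. Expanding in the theta basis, one gets $\sum_l a_l \sum_{l'\in W\cdot l}\theta_{l'} = 0$. Since distinct elements of $\mathscr{A}^+_{\G^L,\odot}(\mathbb{Z}^t)$ lie in distinct $W$-orbits by the previous Lemma, the supports of the different inner sums are disjoint. The linear independence of the theta basis then forces every $a_l = 0$.

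For the spanning statement, I would take an arbitrary $f \in \mathcal{O}_q(\mathscr{P}_{\G,\odot})^W$ and expand it uniquely as $f = \sum_{l' \in \mathscr{A}_{\G^L,\odot}(\mathbb{Z}^t)} c_{l'}\theta_{l'}$. The $W$-invariance $w\cdot f = f$, combined with the fact that $W$ permutes the basis, yields $c_{w\cdot l'} = c_{l'}$ for all $w \in W$; in other words, the coefficients are constant along $W$-orbits. Regrouping the (finitely many) nonzero terms by orbit and using the previous Lemma to pick the unique representative $l \in \mathscr{A}^+_{\G^L,\odot}(\mathbb{Z}^t)$ of each orbit, we obtain $f = \sum_{l} c_l \vartheta_l$, a finite sum. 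The main (and only) conceptual input beyond bookkeeping is the $W$-equivariance of the quantum duality map, which has already been established; the rest is a routine finite-group averaging, so no serious obstacle is anticipated.
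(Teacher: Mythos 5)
Your proof is correct and follows essentially the same route as the paper: expand a $W$-invariant element in the quantum theta basis, observe that the coefficients are constant along $W$-orbits because $W$ permutes the basis, and regroup using the unique orbit representative in $\mathscr{A}^+_{\G^L,\odot}(\mathbb{Z}^t)$. If anything, your version is written more cleanly, since the paper phrases the spanning step in terms of a product $\phi\cdot\varphi$ of two invariant elements where an arbitrary invariant element suffices.
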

\begin{proof} It is clear that the set $\left\{ \vartheta_l\right\}$ is linearly independent. 
Let $\phi$ and $\varphi$ be two elements in $\mathcal{O}_q(\mathscr{P}_{\G, \odot})^W.$ Since $\{\theta_l\}$ is a $\mathbb{L}$-linear basis of $\mathcal{O}_q(\mathscr{P}_{\G, \odot})$, we get a decomposition
\[
\phi\cdot \varphi = \sum_{l\in \mathscr{A}_{\G^L, \odot}(\mathbb{Z}^t)} c(l) \theta_l.
\]
Note that $\phi\cdot \varphi$ is $W$-invariant. Therefore we have $c(w\cdot l)=c(l)$ for any $w\in W$. As a consequence, we get 
\be
\label{sum.expr}
\sum_{l\in \mathscr{A}_{\G^L, \odot}(\mathbb{Z}^t)} c(l) \theta_l= \sum_{l\in \mathscr{A}_{\G^L, \odot}^+(\mathbb{Z}^t)} c(l) \vartheta_l,
\ee
which concludes the proof of the Lemma.
\end{proof}

\bl 
\label{check2}
The structure coefficients of the basis $\{\varepsilon_l\}$ are in $\mathbb{N}[q^{\frac{1}{2}}, q^{-\frac{1}{2}}]$.
\el
\begin{proof} Let $l_1, l_2\in \mathscr{A}_{\G^L,\odot}^+(\mathbb{Z}^t)$. We have
\[
\vartheta_{l_1}\cdot \vartheta_{l_2}= \left(\sum_{l\in W\cdot l_1}\theta_l\right)\left(\sum_{l\in W\cdot l_2}\theta_l\right) = \sum_{l\in \mathscr{A}^+_{\G^L, \odot}(\mathbb{Z}^t)}\left(\sum_{{ {\tiny \begin{array}{l}l'\in W\cdot l_1; \\ l''\in W\cdot l_2\end{array}}}} c_q(l',l''; l)\right) \vartheta_{l}.
\]
Therefore the structural coefficients 
\[\widetilde{c}_q(l_1, l_2; l):= \sum_{{\tiny \begin{array}{l}l'\in W\cdot l_1; \\ l''\in W\cdot l_2\end{array}}} c_q(l',l''; l) \in \mathbb{N}[q^{\frac{1}{2}}, q^{-\frac{1}{2}}]. \qedhere
\]
\end{proof}

\bt
\label{basic.theta.q}
 The tropical points $l_i, l_i', k_i, k_i'$ are invariant under the Weyl group action. We have
\[
\mathbb{W}_{1,i}=\theta_{l_i}=\vartheta_{l_i}, \qquad \mathbb{W}_{2,i}=\theta_{l_i'}=\vartheta_{l_i'};
\]
\[
\mathbb{K}_{1,i}=\theta_{k_i}=\vartheta_{k_i}, \qquad
\mathbb{K}_{2,i}=\theta_{k_i'}=\vartheta_{k_i'}.
\]
\et
\begin{proof} The Weyl group $W$ acts on $\mathscr{A}_{\G^L, \odot}$ via cluster transformations. Hence they map cluster variables to cluster variables and keep the frozen ones invariant. Therefore, the tropical points $l_i$ and $l_i'$ are $W$ invariant.
Meanwhile, since the $\U^L(\mathbb{Z}^t)$ parts of $k_i$ and $k_i'$ are trivial, we have
\[
\mathcal{W}_{p, j}^t(k_i)=\mathcal{W}_{p, j}^t(k_i')=0.
\]
By the definition of the Weyl group action, the tropical points $k_i$ and $k_i'$ are $W$-invariant. 

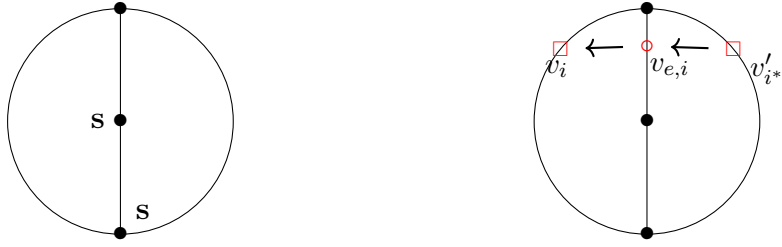
\begin{figure}[ht]
\begin{tikzpicture}
\begin{scope}
\node at (-7,0) {$\bullet$};
\node at (-7, 1.5) {$\bullet$};
\node at (-7, -1.5) {$\bullet$};
\draw (-7, 0) circle (1.5cm);
\draw (-7, -1.5) -- (-7, 1.5);
\node at (-7.3,0) {${\bf s}$};
\node at (-6.7, -1.2) {${\bf s}$};
\end{scope}
\begin{scope}
\node at (0, 1.5) {$\bullet$};
\node at (0, -1.5) {$\bullet$};
\draw (0, 0) circle (1.5cm);
\draw (0,0) -- (0,1.45);
\draw (0,0) -- (0,-1.45);
\node[red,thick] at (0,1) (C) {{\small $\circ$}};
\
\node[red,thick] at (140:1.5) (A) {{\tiny $\square$}};
\node[red,thick] at (40:1.5) (B) {{\tiny $\square$}};
\node at (0,0)  {$\bullet$};
\node at (0.3,0.7) {{\small $v_{e,i}$}};

\node at (-1.2,0.7) {{\small $v_i$}};
\node at (1.6,0.7) {{\small $v_{i^\ast}'$}};
\qarrow{B}{C};
\qarrow{C}{A};
\end{scope}
\end{tikzpicture}
\caption{The decorated ideal triangulation $\widetilde{\mathcal{T}}$.}
\label{Quiver.moduliA2}
\end{figure}

Let ${\bf s}$ be a reduced word of $w_0$ starting with $i$. We consider the decorated triangulation $\widetilde{\mathcal{T}}$ as shown on the left graph of Figure \ref{Quiver.moduliA2}.  As shown on the right graph, the quiver $Q_{\widetilde{\mathcal{T}}}$ contains three vertices $v_i, v_{e,i}, v_{i^*}'$, corresponding to the cluster variables $A_i, A_{e,i}, A_{i^*}'$ respectively. For simplicity, let us write them as 
\[
e_1:=v_i, \qquad e_2:=v_{e,i}, \qquad e_3:=v_{i^\ast}'. 
\]
By Lemma 8.9 and Lemma 15.9 of \cite{GS3}, we get 
\be
\mathbb{W}_{1,i}= X_{e_1}+ X_{e_1+e_2}, \qquad \mathbb{K}_{1,i}= X_{e_1+e_2+e_3}.
\ee
By looking at the leading terms, we prove the second half of the Theorem. The same argument applies to $\mathbb{W}_{2,i}$ and $\mathbb{K}_{2,i}$.
\end{proof}

\subsection{A grading of $\mathcal{O}_q(\mathscr{P}_{\G, \odot})^W$.} 
The Cartan subgroup ${\H}\times{\H}$ acts on $\mathscr{P}_{\G, \odot}$ by rescaling the pinnings, which makes $\mathcal{O}_q(\mathscr{P}_{\G, \odot})^W$ a $\left(X^\ast({\rm H})\times X^\ast({\rm H})\right)$-graded algebra. Below we present the grading on $\mathcal{O}_q(\mathscr{P}_{\G, \odot})^W$ in terms of the quantum cluster duality.

\smallskip

Recall the cluster frozen variables $A_i, A_i'$  of $\mathscr{A}_{\G^L, \odot}$. 
Define the weight map 
\[
{\rm wt}=({\rm wt}_1, {\rm wt}_2):~ \mathscr{A}_{\G^L, \odot}(\mathbb{Z}^t) \longrightarrow X^\ast({\rm H})\times X^\ast({\rm H}),
\]
\[
l \longmapsto \left( \sum_{i=1}^r l(A_i)\alpha_i, \, \sum_{i=1}^rl(A_i')\alpha_i \right).
\]
For $\lambda \in X^*(\H)\times X^*(\H)$, let us set 
\[
\mathcal{O}_q\left(\mathscr{P}_{\G, \odot}\right)^W_{\lambda} = \bigoplus_l \mathbb{L}\vartheta_l,\] 
where $l\in \mathscr{A}_{\G^L, \odot}^+(\mathbb{Z}^t)$ and ${\rm wt}(l)=\lambda.$ 
\bl \label{tech.lemma} The decomposition \[
\mathcal{O}_q\left(\mathscr{P}_{\G, \odot}\right)^W = \bigoplus_{\lambda\in X^*(\H)\times X^*(\H)}  \mathcal{O}_q\left(\mathscr{P}_{\G, \odot}\right)^W_\lambda, 
\] 
makes $\mathcal{O}_q\left(\mathscr{P}_{\G, \odot}\right)^W$ a $X^\ast({\rm H})\times X^\ast({\rm H})$-graded algebra. 
\el
\begin{proof} Consider the $\mathbb{L}$-linear span
\[
\mathcal{O}_q\left(\mathscr{P}_{\G, \odot}\right)_{\lambda} = \bigoplus_l \mathbb{L}\theta_l,
\]
where $l\in \mathscr{A}_{\G^L, \odot}(\mathbb{Z}^t)$ and ${\rm wt}(l)=\lambda$. 

The fact that the frozen variables are invariant under the Weyl group action implies that  
\[{\rm wt}(w\cdot l)={\rm wt}(l), \qquad  \forall w\in W, \, \forall l \in \mathscr{A}_{\G^L, \odot}(\mathbb{Z}^t).
\]
Therefore, the set $\mathcal{O}_q\left(\mathscr{P}_{\G, \odot}\right)^W_{\lambda}$ coincides with the $W$-invariants of $\mathcal{O}_q\left(\mathscr{P}_{\G, \odot}\right)_{\lambda}$. The rest is a direct consequence of Lemma \ref{decomp.quant.cl}.
\end{proof}

\subsection{A filtration of $\OPW_\lambda$.}
Note that each $\OPW_\lambda$ is still infinite-dimensional.  Below we introduce  a further filtration on $\OPW_\lambda$. 

\smallskip

Recall the cluster variables $A_{e,i}$ and $A_{f,i}$ on Figure \ref{Quiver.moduliA}. 
Let us set 
\[
F= A_{e,1}+\ldots+ A_{e, r}+ A_{f,1}+\ldots+A_{f,r}.
\]
Its tropicalization becomes
\[
F^t=\min \{ A_{e,1}^t, \ldots, A_{f,r}^t\}.
\]

\bd
For every $n\in \mathbb{Z}$, we define the following subset of $\OPW_\lambda$
\[
\mathcal{F}^n\OPW_\lambda =\bigoplus_{l} \mathbb{L}\vartheta_l, 
\]
where the direct sum is over the set
\be
\label{para.set2}
\left\{ l \in \mathscr{A}_{\G^L,\odot}^+(\mathbb{Z}^t) ~\middle|~
    {\rm wt}(l)=\lambda, ~ F^t(l)\geq -n
\right\}.
\ee
To avoid cumbersome notations, we  write
\[
\mathcal{F}_\lambda^n:= \mathcal{F}^n\OPW_\lambda.
\]
We get a filtration
\[
\cdots\mathcal{F}_\lambda^{-1}\subset \mathcal{F}_\lambda^0 \subset \mathcal{F}_\lambda^1 \subset \cdots \subset \OPW_\lambda.
\]
\ed

The following Lemma serves as  one of the main technical results for this paper. 
\begin{lemma}
\label{tech.lemma1}
The multiplication of $\OPW$ is compatible with the filtration
\[
m: \mathcal{F}_\lambda^n \times \mathcal{F}_\mu^m \longrightarrow \mathcal{F}_{\lambda+\mu}^{m+n}.
\]
\end{lemma}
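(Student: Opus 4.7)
The plan is to expand $\vartheta_{l_1}\cdot\vartheta_{l_2}$ in the $W$-invariant theta basis $\{\vartheta_l\}$ of Lemma \ref{lemma12d} and to verify, for every surviving term, both the weight condition required by Lemma \ref{tech.lemma} and the new $F$-filtration bound, using Lemma \ref{support.lemma} as the main tool. The proof of Lemma \ref{check2} already yields
\[
\vartheta_{l_1}\cdot\vartheta_{l_2} = \sum_{l \in \mathscr{A}^+_{\G^L,\odot}(\mathbb{Z}^t)} \widetilde{c}_q(l_1,l_2;l)\,\vartheta_l, \qquad \widetilde{c}_q(l_1,l_2;l) = \sum_{\substack{l'\in W\cdot l_1 \\ l''\in W\cdot l_2}} c_q(l', l''; l),
\]
so the task reduces to showing, for every $l$ with $\widetilde{c}_q(l_1,l_2;l)\neq 0$, that $\mathrm{wt}(l) = \lambda+\mu$ and $F^t(l) \geq -(n+m)$.

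For the weight condition I would fix a contributing pair $(l',l'')\in W\cdot l_1 \times W\cdot l_2$ with $l\in \mathrm{Supp}(l',l'')$. Lemma \ref{support.lemma} applied to the frozen variables $A_i, A_i'$ (which define $\mathrm{wt}$) gives the equalities $A_i^t(l) = A_i^t(l')+A_i^t(l'')$, and similarly for $A_i'$. The Weyl group acts on $\mathscr{A}_{\G^L,\odot}$ only by rescaling the flat section $\A_p$ at the puncture, and hence fixes the boundary frozen variables tropically, so $A_i^t(l') = A_i^t(l_1)$ and $A_i^t(l'') = A_i^t(l_2)$. Summing over $i$ gives $\mathrm{wt}(l) = \mathrm{wt}(l_1)+\mathrm{wt}(l_2) = \lambda+\mu$.

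For the filtration, Lemma \ref{support.lemma} applied to each mutable edge variable $X \in \{A_{e,i}, A_{f,i}\}_{i=1}^r$ yields $X^t(l) \geq X^t(l') + X^t(l'') \geq F^t(l') + F^t(l'')$, and taking the minimum over this list produces the key inequality
\[
F^t(l) \;\geq\; F^t(l')+F^t(l'').
\]

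The hard part will be closing the gap between $F^t(l')+F^t(l'')$ and $F^t(l_1)+F^t(l_2)\geq -(n+m)$, because $l_1, l_2 \in \mathscr{A}^+_{\G^L,\odot}(\mathbb{Z}^t)$ but their $W$-translates $l', l''$ in general are not. The expected statement is that $F^t$ is minimized on each $W$-orbit at its unique representative in the positive chamber, so that $F^t(l')\geq F^t(l_1)$ and $F^t(l'')\geq F^t(l_2)$. To establish this I would analyze the simple reflection $s_i$, which rescales $\A_p$ by $\alpha_i^\vee(\mathcal{W}_{p,i})$ and propagates through the cluster structure of $\mathscr{A}_{\G^L,\odot}$ as a quasi-cluster automorphism; in the positive chamber where $\mathcal{W}_{p,j}^t(l_1)\geq 0$ for every $j$, one tracks how the tropical values of the edge invariants $A_{e,j}, A_{f,j}$ transform and verifies that the minimum $F^t$ can only increase. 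This uses the explicit Weyl group action of \cite{GS2,GS3} together with the quantum lift formulas of Theorem \ref{basic.theta.q}, and is where the bulk of the technical work lies. Once it is in place we obtain $F^t(l) \geq F^t(l_1)+F^t(l_2) \geq -n-m$, hence $\vartheta_l \in \mathcal{F}_{\lambda+\mu}^{n+m}$, completing the proof.
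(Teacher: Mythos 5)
Your proposal is correct and follows essentially the same route as the paper: expand in the $W$-invariant basis, use Lemma \ref{support.lemma} for the frozen (weight) and edge variables, and reduce to showing that the tropical edge coordinates $A_{e,i}^t$, $A_{f,i}^t$ are minimized on each $W$-orbit at the positive-chamber representative. The "hard part" you flag is resolved in the paper exactly as you anticipate, via the explicit transformation $A_{e,i}\mapsto A_{e,i}\prod_k\mathcal{W}_{p,k}^{c_k}$ with $\omega_i-w\cdot\omega_i=\sum_k c_k\alpha_k^\vee$, $c_k\geq 0$, whose tropicalization gives $A_{e,i}^t(w\cdot l)\geq A_{e,i}^t(l)$ on the positive chamber.
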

\begin{proof}

The Weyl group acts on $\mathscr{A}_{\G^L, \odot}$ by rescaling the decorated flag $\A_p$ associated with the puncture $p$ and keeping the rest invariant. Let $w\in W$. For every fundamental coweight $\omega_i$ of $\G$, we consider
\[
\omega_i -w\cdot \omega_i =\sum_{k=1}^r c_k \alpha_k^\vee,
\]
where $c_k$ are non-negative integers. Correspondingly, under the action of $w$, the cluster variable $A_{e, i}$ 
becomes\footnote{For $\G=PGL_n$, the change of arbitrary cluster $K_2$ variables under the action of $W$ has been discussed in the recent work of Fraser and Pylyavskyy \cite{FP}. We expect that a similar result of {\it loc.cit.} can be generalized to arbitrary $\G$.}
\[
A_{e,i} \prod_{k=1}^r \mathcal{W}_{p, k}^{c_k},
\]
where $\mathcal{W}_{p, k}$ are the partial potential functions in \eqref{partial.potential}. 
Let  $l$ be in $\mathcal{A}^+_{\G^L, \odot}(\mathbb{Z}^t)$.
We have
\[
A_{e,i}^t(w\cdot l)= A_{e,i}^t(l)+ \sum_{k=1}^r c_k \mathcal{W}_{p,k}^t(l) \geq A_{e, i}^t(l).
\]
A similar formula holds for $A_{f,i}^t$.

Let $l_1, l_2 \in \mathscr{A}_{\G^L, \odot}^+(\mathbb{Z}^t)$. Recall the support in \eqref{adcx}. Let us set 
\[
S(l_1,l_2):= \bigcup_{u,v\in W} {\rm Supp}(u\cdot l_1, v\cdot l_2).
\]
By definition, we have
\[
\vartheta_{l_1} \cdot \vartheta_{l_2} = \sum_{l\in S(l_1,l_2)} \widetilde{c}_q(l_1,l_2;l) \theta_{l} = \sum_{l \in \mathscr{A}_{\G^L,\odot}^+(\mathbb{Z}^t) \cap S(l_1,l_2)} \widetilde{c}_q(l_1,l_2;l) \vartheta_{l},
\]
where the last equality follows from \eqref{sum.expr}.
By Lemma \ref{support.lemma}, if $l\in S(l_1,l_2)$, then we have
\[
A_{e,i}^t(l)\geq \min_{u,v\in W}\{A_{e,i}^t(u\cdot l_1)+A_{e,i}^t(v\cdot l_2)\} = A_{e,i}^t(l_1)+A_{e,i}^t(l_2).
\]
By taking the minimum over $A_{e,i}^t$ and $A_{f,i}^t$, we get
\[
F^t(l)\geq F^t(l_1)+ F^t(l_2), \qquad \forall l \in S(l_1,l_2).
\]
Combining with Lemma \ref{tech.lemma}, we conclude the proof of Lemma \ref{tech.lemma1}.
\end{proof}

One key feature of the free $\mathbb{L}$-module $\mathcal{F}_{\lambda}^0$ is that it is of finite dimension. Below we compute the dimension of $\mathcal{F}_{\lambda}^0$. The simple positive roots $\alpha_1, \ldots, \alpha_r$ of $\G$ are the simple positive coroots of $\G^L$. Let us set 
\[
{\alpha}_{{\bf i},k}=s_{i_1}\ldots s_{{i}_{k-1}}(\alpha_{i_k}), \qquad {\alpha}_{{\bf i},k}^\ast =s_{i_1^\ast }\ldots s_{{i}_{k-1}^\ast }(\alpha_{i_k^\ast})
\]
Recall the vector $(\vec{a},\vec{b},\vec{c},\vec{d})$  in \eqref{dimofp}. Let us define
\[
w_{\bf i}(\vec{a},\vec{b},\vec{c},\vec{d})=\left(\sum_{k=1}^n a_k \alpha_{{\bf i}, k} + \sum_{i=1}^{r}(b_i+d_i)\alpha_i,\, \sum_{i=1}^{r}(b_i+d_i)\alpha_{i^\ast} +
    \sum_{k=1}^n d_k \alpha_{{\bf i}, k}^\ast\right) \in X^*(\H)\times X^*(\H).
\]
\bl 
\label{dimofFap}
Let us fix a reduced word ${\bf i}$ of $w_0$.
The dimension $\dim_{\mathbb{L}}\mathcal{F}_\lambda^0$ is equal to the cardinality of the set 
\be
\label{para.set}
\left\{ (\vec{a},\vec{b},\vec{c},\vec{d})\in \mathbb{N}^{2n+2r} ~\middle|~ w_{\bf i}(\vec{a},\vec{b},\vec{c},\vec{d})=
    \lambda
\right\}.
\ee
\el
\begin{proof}
By definition, $\dim_{\mathbb{L}}\mathcal{F}_\lambda^0$ is equal to the cardinality of the set \eqref{para.set2}.

\smallskip 

Let us set $x_i(t)={\rm exp}(te_i)$. Fix a reduced word ${\bf i}=(i_1, \ldots, i_n)$ of $w_0$. Lusztig \cite{LUS90} considered the open embedding
\[
\tau_{\bf i}:~
(\mathbb{G}_m)^n \longrightarrow \U^L, \qquad (t_1,\ldots, t_n) \lms x_{i_n}(t_n)\ldots x_{i_1}(t_1).
\]
The embedding defines a positive structure on $\U^L$, which does not depend on the ${\bf i}$ chosen. 
The tropicalization of $\tau_{\bf i}$ induces a bijection
\[
\tau_{\bf i}^t:~\mathbb{Z}^n \stackrel{\sim}{\lra} \U^L(\mathbb{Z}^t),
\]
whose restriction to $\mathbb{N}^n$ is a bijection 
\[
\tau_{\bf i}^t:~ \mathbb{N}^n \stackrel{\sim}{\lra} \U^L_+(\mathbb{Z}^t).
\]
Let us write ${\bf i}^*=(i_1,\ldots, i_n)$.
Recall $\mathfrak{w}^t$ in \eqref{tropisweight0}.
Composing the map $\left((\tau_{{\bf i}^*}^t)^{-1}, \mathfrak{w}^t, (\tau_{\bf i}^t)^{-1}, \mathfrak{w}^t \right)$  with \eqref{trop.c.a}, we obtain a bijection
\[
p_{\bf i}:\mathscr{A}_{\G^L,\odot}^+(\mathbb{Z}^t)\stackrel{\sim}{\lra} \mathbb{N}^n\times \mathbb{Z}^r \times \mathbb{N}^n\times \mathbb{Z}^r.
\]
By the definition of $F^t$,  a point $l\in \mathscr{A}_{\G^L,\odot}^+(\mathbb{Z}^t)$ satisfies $F^t(l)\geq 0$ if and only if its image
\be
\label{trioc,a.30}
p_{\bf i}^t(l)=(\vec{a},\vec{b},\vec{c},\vec{d})\in \mathbb{N}^{2n+2r}.
\ee

\smallskip 

Recall the edge invariants $h_3$ and $h_4$. The tropicalization of the map 
\[
\mathscr{A}_{\G^L, \odot}\longrightarrow  \H^L\times \H^L,\qquad 
(\mathcal{L}, \{\A_e, \A_f\}\cup\{\A_p\}) \longmapsto (h_3, h_4)
\]
composed with \eqref{tropisweight} is exactly the weight map ${\rm wt}$.

Define $[g]_0=h$ for $g=u_+hu_-$, where $u_{+}\in \U^L$, $h\in \H^L$, and $u_-\in \U^L_-$. It gives rise to a positive map
\[
\beta:~ \U^L\longrightarrow \H^L, \qquad u \longmapsto [\overline{w}_0u]_0.
\]
By Lemma 5.3 of \cite{GS1}, we have 
\[
\beta(x_{i_n}(t_n)\ldots x_{i_1}(t_1))= \prod_{k=1}^n \alpha_{{\bf i},k}(t_k^{-1}).
\]
Let  $h^*=w_0(h^{-1})$. Following Property 4 of \cite[Lemma 6.4]{GS1}, the invariants in Figure \ref{moduliA} satisfy the relations
\[
h_3=\beta(u_1)^*h_1h_2^*,\qquad h_4^{-1}= \beta(u_2)^\ast h_1^*h_2
\]
Let us set 
\[
u_1=x_{i_n^*}(a_n)\ldots x_{i_1^*}(a_1),\qquad h_1=\prod_{i=1}^r \alpha_i(b_i^{-1}),
\]
\[
u_2=x_{i_n}(c_n)\ldots x_{i_1}(c_1),\qquad h_2=\prod_{i=1}^r \alpha_{i^\ast}(d_i^{-1}).
\]
Then we have
\[
h_3^{-1}=\prod_{k=1}^n \alpha_{{\bf i},k}(a_k) \cdot \prod_{i=1}^r \alpha_i(b_id_i),\qquad h_4^{-1}= \prod_{k=1}^n \alpha_{{\bf i},k}^\ast(c_k) \cdot \prod_{i=1}^r \alpha_{i^*}(b_id_i).
\]
whose tropicalization composed with \eqref{tropisweight} recovers the map in \eqref{para.set}.  
Therefore for any  $l\in \mathscr{A}^+_{\G^L,\odot}(\mathbb{Z}^t)$, we have
\[
{\rm wt}(l)=\lambda   ~~\Longleftrightarrow w_{\bf i}(p_{\bf i}(l)) =\lambda.
\]
Therefore $p_{\bf i}$ defines a bijection from the set \eqref{para.set} to the set \eqref{para.set2}. The Lemma follows. 
\end{proof}

\subsection{An embedding of the quantum group} \la{sec5.4}
By Lemma \ref{lemma12d}, the algebra $\mathcal{O}_q(\mathscr{P}_{\G, \odot})^W$ is a free $\mathbb{L}$-module. We shall also change the basis and consider
\[
\mathcal{O}_q(\mathscr{P}_{\G, \odot})^W_{\mathbb{K}}:= \mathcal{O}_q(\mathscr{P}_{\G, \odot})^W\otimes_{\mathbb{L}}\mathbb{K}.
\]
The paper \cite{GS3} constructs a natural embedding
\be
\label{kappa}
\begin{array}{c}
\kappa:~\mathcal{D}_q(\mathfrak{b}) \longrightarrow \mathcal{O}_q(\mathscr{P}_{\G, \odot})^W_{\mathbb{K}}\\
{\bf E}_i \longmapsto \mathbb{W}_{1, i}, \quad {\bf F}_i \longmapsto \mathbb{W}_{2, i^*}, \quad {\bf K}_i \longmapsto  \mathbb{K}_{1,i},\quad 
\widetilde{\bf K}_i \longmapsto \mathbb{K}_{2, i^*},
\end{array}
\ee
where $i^*$ is such that $s_{i^*}= w_0 s_i w_0$.

\smallskip

Let us write 
\[
\mathcal{F}^0:= \mathcal{F}^0\OPW=\bigoplus_\lambda \mathcal{F}_\lambda^0.
\]
By Lemma \ref{tech.lemma1}, $\mathcal{F}^0$ forms a $\mathbb{L}$-subalgebra of $\OPW$ with a basis 
\[
\left\{ \vartheta_l ~\middle|~ l\in \mathscr{A}^+_{\G^L,\odot}(\mathbb{Z}^t),~ F^t(l)\geq 0\right\}.
\]
We further set 
\[
\mathcal{F}^0_{\mathbb{K}}:= \mathcal{F}^0 \otimes_{\mathbb{L}} \mathbb{K}, \qquad \mathcal{F}^0_{\alpha, \mathbb{K}}:= \mathcal{F}_{\alpha}^0 \otimes_{\mathbb{L}} \mathbb{K}.
\]
Recall that $\widetilde{\bf U}_q(\mathfrak{g})$ is the $\mathbb{L}$-linear span of the PBW basis. Below we investigate the image of $\widetilde{\bf U}_q(\mathfrak{g})$ under the map $\kappa$.

\smallskip 

The braid group ${\rm Br}_{\G}$ associated with $\G$ is generated by $\sigma_1, \ldots, \sigma_r$, satisfying the relations
\[\left\{
\begin{array}{ll}
\sigma_i\sigma_j\sigma_i=\sigma_j\sigma_i\sigma_j, & \mbox{if $a_{ij}=-1$};\\
\sigma_i\sigma_j=\sigma_i\sigma_j, &\mbox{if $a_{ij}=0$}.
\end{array}\right.
\]
The paper \cite{GS3} shows that for every boundary circle of $\bS$ with an even number of marked points, there is a Braid group action on the moduli space $\mathscr{P}_{\G, \bS}$. Furthermore, the group ${\rm Br}_{\G}$ acts via quasi-cluster transformations (cf. Theorem 13.13 of {\it loc.cit.}), and therefore can be lift to an action on $\mathcal{O}_q(\mathscr{P}_{\G, \bS})$. For $\mathscr{P}_{\G, \odot}$, an explicit expression of the braid group action has been given in \cite[\S 4.3]{Sh2}. Theorem 15.14 of \cite{GS3} shows that the braid group action on $\mathcal{O}_q(\mathscr{P}_{\G, \odot})$ coincides with Lusztig's braid group action on $\mathcal{D}_q(\mathfrak{b})$ under the isomorphism \eqref{kappa}. 

Recall ${\bf E}_{{\bf i},k}$ and ${\bf F}_{{\bf i},k}$ in \eqref{pbw.c1}. By Theorem \ref{basic.theta.q}, we have
\be
\label{2ascm}
\kappa({\bf E}_{{\bf i},k})= \kappa(T_{i_1}T_{i_2}\ldots T_{i_{k-1}}{\bf E}_{i_k}) =\sigma_{i_1}\sigma_{i_2}\ldots \sigma_{i_{k-1}}\cdot \theta_{l_i}.
\ee
Since $\sigma_i$ acts via quasi-cluster transformation, we see that $\kappa({\bf E}_{{\bf i},k})$ remains a quantum theta function. In particular
\[
\kappa({\bf E}_{{\bf i},k}) \in \mathcal{O}_q(\mathscr{P}_{\G,\odot})^W. 
\]
\begin{lemma}
\label{lemm59}
We have 
\be
\label{weight.e.f}
\kappa({\bf E}_{{\bf i},k})\in \mathcal{F}_{(\alpha_{{\bf i},k},0)}^0, \qquad \kappa({\bf F}_{{\bf i},k})\in \mathcal{F}_{(0, \alpha_{{\bf i},k}^\ast)}^0
\ee
\be
\label{weight.k}
\kappa({\bf K}_i)\in \mathcal{F}_{(\alpha_i,\alpha_{i^\ast})}^0, \qquad \kappa(\widetilde{\bf K}_i) \in \mathcal{F}_{(\alpha_i, \alpha_{i^\ast})}^0.
\ee
\end{lemma}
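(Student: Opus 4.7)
I would split the four kinds of generators into two groups and reconcile the coefficient ring at the very end.

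For the Cartan generators, the proof of Theorem~\ref{basic.theta.q} already exhibits the leading-monomial formula $\mathbb{K}_{1,i}=X_{e_1+e_2+e_3}$ in the cluster chart of Figure~\ref{Quiver.moduliA2}, where $e_1=v_i$, $e_2=v_{e,i}$, $e_3=v_{i^\ast}'$. Item~(6) of Theorem~\ref{quantum.cluster.dual} then recovers the tropical point $k_i$ by reading off its coordinates from this leading monomial: $k_i(A_i)=1$, $k_i(A_{e,i})=1$, $k_i(A_{i^\ast}')=1$, and $k_i=0$ on every other cluster $K_2$ variable. Consequently $\mathrm{wt}(k_i)=(\alpha_i,\alpha_{i^\ast})$, while $F^t(k_i)=\min_{j}\{k_i(A_{e,j}),k_i(A_{f,j})\}=0$. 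Hence $\kappa(\bK_i)=\vartheta_{k_i}\in\mathcal{F}^0_{(\alpha_i,\alpha_{i^\ast})}$; the case of $\kappa(\tbK_i)=\mathbb{K}_{2,i^\ast}$ is mirror-symmetric.

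For $\kappa(\bE_{{\bf i},k})$, the key input is the classical fact that in simply-laced types the Lusztig root vector $\bE_{{\bf i},k}=T_{i_1}\cdots T_{i_{k-1}}\bE_{i_k}$ lies in the weight-$\alpha_{{\bf i},k}$ component of the positive subalgebra of $\bUq$, and hence admits a $\mathbb{K}$-expansion $\bE_{{\bf i},k}=\sum_J c_J \, \bE_{j_1^J}\cdots \bE_{j_{m_J}^J}$ with $\sum_r\alpha_{j_r^J}=\alpha_{{\bf i},k}$; this is obtained by iterating $T_j(\bE_l)=(q^{1/2}\bE_l\bE_j-q^{-1/2}\bE_j\bE_l)/(q-q^{-1})$ for $a_{jl}=-1$ and $T_j(\bE_l)=\bE_l$ for $a_{jl}=0$, without ever triggering $T_j(\bE_j)=q^{-1}\bK_j^{-1}\bF_j$, since adjacent indices in a reduced word differ. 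Applying $\kappa$ expresses $\kappa(\bE_{{\bf i},k})$ as a $\mathbb{K}$-combination of products $\mathbb{W}_{1,j_1^J}\cdots\mathbb{W}_{1,j_{m_J}^J}$. By Theorem~\ref{basic.theta.q} each factor $\mathbb{W}_{1,j}=\theta_{l_j}$ has $l_j(A_j)=1$ with $l_j=0$ on all other cluster variables, giving $\mathbb{W}_{1,j}\in\mathcal{F}^0_{(\alpha_j,0)}$; iterated application of Lemma~\ref{tech.lemma1} then deposits each product, and hence their $\mathbb{K}$-combination $\kappa(\bE_{{\bf i},k})$, into $\mathcal{F}^0_{(\alpha_{{\bf i},k},0),\mathbb{K}}$. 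The case of $\kappa(\bF_{{\bf i},k})$ is parallel, using $\kappa(\bF_j)=\mathbb{W}_{2,j^\ast}\in\mathcal{F}^0_{(0,\alpha_{j^\ast})}$ together with the identity $\sum_r\alpha_{j_r^\ast}=\sum_r(-w_0\alpha_{j_r})=-w_0\alpha_{{\bf i},k}=\alpha_{{\bf i},k}^\ast$, which follows from $w_0 s_j w_0^{-1}=s_{j^\ast}$.

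Finally, the $\mathbb{K}$-to-$\mathbb{L}$ upgrade uses \eqref{2ascm}: since ${\rm Br}_\G$ embeds into the quasi-cluster modular group and so permutes the theta basis by Theorem~\ref{quantum.cluster.dual}(1), the identity $\kappa(\bE_{{\bf i},k})=\sigma_{i_1}\cdots\sigma_{i_{k-1}}\cdot\theta_{l_{i_k}}$ shows that $\kappa(\bE_{{\bf i},k})=\theta_{l'}$ for a single tropical point $l'$; since $\kappa$ lands in $\OPW_\mathbb{K}$, the point $l'$ is $W$-fixed and $\theta_{l'}=\vartheta_{l'}$. Because a single basis element $\vartheta_{l'}$ of Lemma~\ref{lemma12d} lies in $\mathcal{F}^0_{\lambda,\mathbb{K}}$ exactly when it already lies in the $\mathbb{L}$-subspace $\mathcal{F}^0_{\lambda}$, the previous paragraph automatically upgrades to the claimed $\mathbb{L}$-level inclusions. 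The main obstacle is the positivity assertion that $\bE_{{\bf i},k}$ lies in the positive subalgebra (so that $\kappa(\bE_{{\bf i},k})$ is expressible in the $\mathbb{W}_{1,\cdot}$'s alone and the filtration argument becomes available), a classical but non-trivial property of Lusztig PBW root vectors in simply-laced types that uses essentially the hypothesis that ${\bf i}$ is reduced.
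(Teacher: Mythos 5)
Your argument is correct and, for the root vectors $\bE_{{\bf i},k}$, $\bF_{{\bf i},k}$, it follows the paper's own strategy: reduce to membership of the root vector in the subalgebra generated by the $\bE_j$'s (resp.\ $\bF_j$'s), push the weight and $F^t$ bounds through the filtration compatibility of Lemma \ref{tech.lemma1}, and upgrade from $\mathbb{K}$ to $\mathbb{L}$ by observing via \eqref{2ascm} that $\kappa(\bE_{{\bf i},k})$ is a single ($W$-fixed) theta function. You diverge from the paper only in two places. First, for the Cartan generators the paper does not compute the tropical points $k_i,k_{i^\ast}'$ directly; it instead applies $\kappa$ to the relation \eqref{eq3}, places $\kappa(\bE_i\bF_i-\bF_i\bE_i)$ in $\mathcal{F}^0_{(\alpha_i,\alpha_{i^\ast})}$ by Lemma \ref{tech.lemma1}, and deduces that the two distinct basis vectors $\vartheta_{k_i},\vartheta_{k_{i^\ast}'}$ each lie there. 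Your direct reading of $a(k_i)=e_1+e_2+e_3$ from the leading monomial of Theorem \ref{basic.theta.q} is legitimate (all of $A_{e,j},A_{f,j},A_j,A_j'$ belong to the chart $Q_{\widetilde{\mathcal{T}}}$, so $\mathrm{wt}$ and $F^t$ are computable there) and is arguably more transparent. Second, you obtain the weight $(\alpha_{{\bf i},k},0)$ from weight-homogeneity of the expansion of $\bE_{{\bf i},k}$ in monomials of the $\bE_j$'s, whereas the paper runs an induction on $k$ using the explicit formulas for $T_i\bE_j$; these are equivalent in substance.

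One caveat: your parenthetical justification of the key classical input --- that $\bE_{{\bf i},k}$ lies in the subalgebra generated by the $\bE_j$'s --- is wrong as stated. It is not true that the computation ``never triggers $T_j(\bE_j)=q^{-1}\bK_j^{-1}\bF_j$'': already for $T_1T_2(\bE_1)$ in type $A_2$ (reduced word $121$), one must apply $T_1$ to an expression containing $\bE_1$, and the resulting $\bF$-terms cancel only after a genuine computation. The correct statement is Lusztig's theorem that $T_{i_1}\cdots T_{i_{k-1}}(E_{i_k})$ lies in the positive part precisely when $s_{i_1}\cdots s_{i_k}$ is reduced. Since you explicitly flag this as a nontrivial classical input at the end --- and the paper itself invokes the same fact without proof --- this does not invalidate your argument, but the ``adjacent indices differ'' heuristic should be deleted.
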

\begin{proof} Recall the tropical points $l_i, l_i'$ associated with the frozen vertices. By definition, we have
\[
\alpha(l_i) =(\alpha_i, 0), ~~~\alpha(l_i')=(0, \alpha_{i^\ast}),~~~\mbox{and } F^t(l_i)=F^t(l_i')=0.
\]
 By Theorem \ref{basic.theta.q}, we have 
\be
\label{checka}
\kappa({\bf E}_i)=\vartheta_{l_i} \in \mathcal{F}_{(\alpha_i,0)}^0, \qquad \kappa({\bf F}_{i})=\vartheta_{l_{i^\ast}'}\in \mathcal{F}_{(0,\alpha_{i^\ast})}^0
\ee
The relation \eqref{eq3} implies that 
\[
\kappa({\bf E}_i{\bf F}_i-{\bf F}_i{\bf E}_i)=\kappa\left((q-q^{-1})({\bf K}_i-\widetilde{\bf K}_i)\right) = (q-q^{-1}) (\vartheta_{k_i}-\vartheta_{k_{i^\ast}}) \in \mathcal{F}^0_{(\alpha_{i},\alpha_{i^\ast})}.
\]
Therefore we get \eqref{weight.k}.

Note that ${\bf E}_{{\bf i}, k}$ belongs to $\mathcal{U}(\mathfrak{n})$, the $\mathbb{K}$-subalgebra generated by ${\bf E}_i$ for $1\leq i\leq r$. Therefore $\kappa({\bf E}_{{\bf i},k})$ belong to the $\mathbb{K}$-subalgebra generated by $\vartheta_{l_i}$ for $1\leq i\leq r$. Combining \eqref{2ascm} with \eqref{checka}, we get 
\be
\label{iacss}
{\bf E}_{{\bf i}, k} \in \mathcal{F}^0.
\ee
Now let us show that 
\be
\label{oncinc}
T_i{\bf E}_j \in \OPW_{(s_i(\alpha_j),0)}.
\ee
Indeed, if $a_{ij}=-1$, then 
\[
\kappa(T_i{\bf E}_j) =\kappa\left(\frac{q^{1/2}\bE_j\bE_i - q^{-1/2}\bE_i\bE_j}{q-q^{-1}}\right)= \frac{q^{1/2}\vartheta_j\vartheta_i - q^{-1/2}\vartheta_i\vartheta_j}{q-q^{-1}} \in \mathcal{F}^0_{(s_i(\alpha_j),0)}.
\]
For $a_{ij}=0$ or $i=j$, the inclusion \eqref{oncinc} follows by a similar calculation. 
By induction on $k$, we get 
\[
{\bf E}_{{\bf i}, k} \in \OPW_{(\alpha_{{\bf i}, k},0)}.
\]
Combining with \eqref{iacss}, we obtain the desired inclusion. The inclusion for ${\bf F}_{{\bf i}, k}$ follows by a similar argument.
\end{proof}

Recall the PBW basis elements in \eqref{PBW.as12}. 
Fix a $\lambda \in X^*(\H)\times X^*(\H)$. 
Let $\widetilde{\bf U}_q(\mathfrak{g})_\lambda$ be the $\mathbb{L}$-linear span of the PBW basis elements $\bE_{\bf i}(\vec{a})\bK(\vec{b})\bF_{\bf i}(\vec{c})\tbK(\vec{d})$ such that 
$w_{\bf i}(\vec{a},\vec{b},\vec{c},\vec{d})=\lambda$.
By Lemma \ref{lemm59}, we have
\be
\label{PBW011}
\kappa(\widetilde{\bf U}_q(\mathfrak{g})_\lambda) \subset \mathcal{F}_\lambda^0.
\ee

\smallskip 
\begin{proposition}
\label{propa,a}
 The map $\kappa: \widetilde{\bf U}_q(\mathfrak{g})_\lambda \rightarrow  \mathcal{F}_\lambda^0 $  is an isomorphism. 
\end{proposition}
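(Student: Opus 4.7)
My plan is to combine four ingredients: the inclusion \eqref{PBW011}, injectivity inherited from the embedding $\kappa:\mathcal{D}_q(\mathfrak{b})\hookrightarrow\mathcal{O}_q(\mathscr{P}_{\G,\odot})^W_{\mathbb{K}}$, a rank comparison provided by Lemma \ref{dimofFap}, and a leading-monomial argument inside a chosen quantum torus that upgrades the resulting $\mathbb{K}$-linear isomorphism to an $\mathbb{L}$-linear one. Explicitly, $\widetilde{\bf U}_q(\mathfrak{g})_\lambda$ is a free $\mathbb{L}$-module on the PBW basis elements $\bE_{\bf i}(\vec{a})\bK(\vec{b})\bF_{\bf i}(\vec{c})\tbK(\vec{d})$ satisfying $w_{\bf i}(\vec{a},\vec{b},\vec{c},\vec{d})=\lambda$, and by Lemma \ref{dimofFap} the target $\mathcal{F}^0_\lambda$ is a free $\mathbb{L}$-module of the same rank, with basis $\{\vartheta_{l_\nu}\}$ where $l_\nu := p_{\bf i}^{-1}(\nu)$. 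Tensoring $\kappa$ with $\mathbb{K}$ therefore yields an injective $\mathbb{K}$-linear map between two $\mathbb{K}$-vector spaces of the same finite dimension, hence an isomorphism.

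To upgrade to $\mathbb{L}$, I would fix the seed $(Q_{\widetilde{\mathcal{T}}},\{X_v\})$ used in the proof of Lemma \ref{dimofFap} and analyze leading monomials inside the quantum torus $\mathbf{T}_{Q_{\widetilde{\mathcal{T}}}}$. By Theorem \ref{basic.theta.q} and \eqref{2ascm}, and because the braid group acts via quasi-cluster transformations and hence permutes the quantum theta basis by Theorem \ref{quantum.cluster.dual}(1), each of $\kappa(\bE_{{\bf i},k})$, $\kappa(\bF_{{\bf i},k})$, $\kappa(\bK_i)$, $\kappa(\widetilde{\bK}_i)$ is itself a single quantum theta function whose leading $X$-monomial is controlled by \eqref{sann}. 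Multiplying these in the PBW order inside $\mathbf{T}_{Q_{\widetilde{\mathcal{T}}}}$ produces a leading monomial of the form $q^{N_\nu/2}X_{v_\nu}$, where $v_\nu$ is the $\mathbb{N}$-linear combination of the individual leading vectors dictated by the exponents $(\vec{a},\vec{b},\vec{c},\vec{d})$ and $N_\nu\in\mathbb{Z}$ collects the $q$-commutation factors. The very construction of $p_{\bf i}$ from Lusztig's parametrization in the proof of Lemma \ref{dimofFap} forces $v_\nu = a(l_\nu)$; since the map $l\mapsto a(l)$ is injective in the chosen seed, the coefficient of $\vartheta_{l_\nu}$ in the $\vartheta$-expansion of $\kappa\bigl(\bE_{\bf i}(\vec{a})\bK(\vec{b})\bF_{\bf i}(\vec{c})\tbK(\vec{d})\bigr)$ equals $q^{N_\nu/2}$, and every $\vartheta_{l_\mu}$ with $a(l_\mu)$ strictly smaller than $a(l_\nu)$ has vanishing coefficient.

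Consequently, ordering the quadruples $\nu$ by any linear refinement of the partial order induced by $a(l_\nu)$, the transition matrix from the PBW basis to the $\vartheta$-basis of $\mathcal{F}^0_\lambda$ is triangular with diagonal entries $q^{N_\nu/2}\in\mathbb{L}^\times$. Hence $\det\kappa|_{\widetilde{\bf U}_q(\mathfrak{g})_\lambda}\in\mathbb{L}^\times$, and $\kappa$ restricts to an $\mathbb{L}$-linear isomorphism onto $\mathcal{F}^0_\lambda$. The principal obstacle is the leading-monomial identity: one must carry out the explicit calculation in $\mathbf{T}_{Q_{\widetilde{\mathcal{T}}}}$ to verify that multiplying the leading $X$-monomials of the PBW factors in the prescribed order produces exactly $q^{N_\nu/2}X_{a(l_\nu)}$ without cancellation, and one must rule out that competing contributions from the other Weyl-orbit representatives $\theta_{w\cdot l_\nu}$ inside $\vartheta_{l_\nu}$ interfere with the leading term. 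Both points rely on the positivity of the structure constants in Theorem \ref{quantum.cluster.dual}(3) to guarantee that the accumulated coefficient on the distinguished monomial remains a single unit power of $q^{1/2}$.
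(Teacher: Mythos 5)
Your proposal follows essentially the same route as the paper: a dimension count via Lemma \ref{dimofFap} plus injectivity of $\kappa$ gives the isomorphism after tensoring with $\mathbb{K}$, and then the observation that $\kappa$ of each PBW monomial is a product of quantum theta functions, hence by \eqref{sann} has a unitriangular (leading monomial $q^{s}X_v$ plus strictly higher terms) expansion in a fixed quantum torus, forces the $\mathbb{K}$-coefficients to lie in $\mathbb{L}$. Your version is more explicit about the triangularity bookkeeping than the paper's, but the argument is the same.
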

\begin{proof} By Lemma \ref{dimofFap}, we have
\[
\dim_{\mathbb{L}} \widetilde{\bf U}_q(\mathfrak{g})_\lambda = \dim_{\mathbb{L}} \mathcal{F}_{\lambda}^0 < \infty.
\]
The map $\kappa$ is injective, therefore we get 
\[
\kappa\left(\widetilde{\bf U}_q(\mathfrak{g})_\lambda\otimes_{\mathbb{L}}\mathbb{K}\right) = \mathcal{F}_{\alpha}^0 \otimes_{\mathbb{L}}\mathbb{K}.
\]
Therefore every element $\phi\in \mathcal{F}_\alpha^0$ admits a finite $\mathbb{K}$-linear expansion of the images of the PBW basis elements.
Now let us fix a quantum seed for $\mathcal{O}_q(\mathscr{P}_{\G, \odot})$. 
Recall that $\kappa({\bf E}_{{\bf i},k}), \kappa({\bf K}_i), \kappa({\bf F}_{{\bf i},k})$, and  $\kappa(\widetilde{{\bf K}}_i)$ are all quantum theta functions. By \eqref{sann}, we have
\[
\kappa(\bE_{\bf i}(\vec{a})\bK(\vec{b})\bF_{\bf i}(\vec{c})\tbK(\vec{d})) = q^{s} X_{v}+ \sum_{w>v} d(q) X_w,
\]
where $d(q)$ is a Laurent polynomial of $q^{\frac{1}{2}}$ with integral coefficients. 
As a consequence, the coefficients of the $\mathbb{K}$-linear expansion of $\phi$ must be in $\mathbb{L}$. The Proposition follows.  
\end{proof}

\begin{theorem} 
\label{iso,mai}
The restriction of the embedding $\kappa$ to $\widetilde{{\bf U}}_q(\mathfrak{g})$ gives rise to an  isomorphism 
\[
\kappa: ~ \widetilde{{\bf U}}_q(\mathfrak{g}) \stackrel{\sim}{\longrightarrow} \mathcal{F}^0.
\]
\end{theorem}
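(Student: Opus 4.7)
The plan is to assemble Proposition~\ref{propa,a}, which handles one graded component at a time, into a global statement by summing over all weights. By definition, $\tbUq$ is the $\mathbb{L}$-span of the PBW basis elements listed in \eqref{dimofp}, and $\tbUq_\lambda$ was defined as the $\mathbb{L}$-span of those PBW elements whose parameters satisfy $w_{\mathbf{i}}(\vec a,\vec b,\vec c,\vec d)=\lambda$. Since the PBW basis is itself $\mathbb{L}$-free, this gives tautologically the direct sum decomposition
\[
\tbUq \;=\; \bigoplus_{\lambda \in X^\ast(\H)\times X^\ast(\H)} \tbUq_\lambda
\]
as an $\mathbb{L}$-module.

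First I would combine this decomposition with Lemma~\ref{tech.lemma}, which supplies the analogous splitting $\mathcal{F}^0 = \bigoplus_\lambda \mathcal{F}^0_\lambda$. The inclusion $\kappa(\tbUq_\lambda) \subset \mathcal{F}^0_\lambda$ is precisely \eqref{PBW011}, obtained from Lemma~\ref{lemm59} together with the fact that $\kappa$ is an algebra homomorphism. Proposition~\ref{propa,a} then upgrades this inclusion to an $\mathbb{L}$-module isomorphism $\kappa: \tbUq_\lambda \stackrel{\sim}{\longrightarrow} \mathcal{F}^0_\lambda$ on each weight piece, and taking the direct sum over all $\lambda$ immediately yields the desired isomorphism $\kappa: \tbUq \stackrel{\sim}{\longrightarrow} \mathcal{F}^0$. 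I would close by remarking that since $\mathcal{F}^0$ is an $\mathbb{L}$-subalgebra of $\mathcal{O}_q(\mathscr{P}_{\G,\odot})^W$ by Lemma~\ref{tech.lemma1} and $\kappa$ is an algebra map, it follows that $\tbUq$ is in fact an $\mathbb{L}$-subalgebra of $\tqg$; moreover, because $\mathcal{F}^0$ is manifestly independent of the auxiliary reduced word $\mathbf{i}$, so is $\tbUq$, as asserted in the paragraph following \eqref{dimofp}.

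The genuinely nontrivial content resides entirely in the local statement Proposition~\ref{propa,a}, whose proof rests on the dimension-matching Lemma~\ref{dimofFap} together with the leading-term control \eqref{sann} that forces the $\mathbb{K}$-expansion coefficients of any element of $\mathcal{F}^0_\lambda$ in the images of the PBW basis to lie already in $\mathbb{L}=\mathbb{Z}[q^{1/2},q^{-1/2}]$. Once this is in hand, the assembly above is formal. The only delicate point to watch is that each individual $\tbUq_\lambda$ is defined relative to a fixed reduced word $\mathbf{i}$, while the resulting sum is intrinsic only a posteriori, after being identified through $\kappa$ with the manifestly $\mathbf{i}$-independent algebra $\mathcal{F}^0$.
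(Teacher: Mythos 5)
Your proposal is correct and follows essentially the same route as the paper: the paper's proof of Theorem \ref{iso,mai} simply declares the bijectivity a direct consequence of Proposition \ref{propa,a} (i.e., the weight-by-weight isomorphism assembled over the decompositions $\widetilde{\bf U}_q(\mathfrak{g})=\bigoplus_\lambda \widetilde{\bf U}_q(\mathfrak{g})_\lambda$ and $\mathcal{F}^0=\bigoplus_\lambda\mathcal{F}^0_\lambda$), and then uses the fact that $\mathcal{F}^0$ is an $\mathbb{L}$-algebra to conclude that $\widetilde{\bf U}_q(\mathfrak{g})$ is one too, which is exactly your closing remark. You correctly identify that the real content lives in Proposition \ref{propa,a} and that the assembly step is formal.
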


\begin{proof}

 As a direct consequence of Proposition \ref{propa,a}, we see that the map $\kappa$ from $\widetilde{{\bf U}}_q(\mathfrak{g})$ to $\mathcal{F}^0$ is a bijection. Note that $\mathcal{F}^0$ is an $\mathbb{L}$-algebra. As a consequence,  $\widetilde{\bf U}_q(\mathfrak{g})$ is an $\mathbb{L}$-algebra. 
\end{proof}
\smallskip

\bt 
\label{isomo.duan.ao}
The map $\kappa$ defines an algebra isomorphism
\be
\label{zk1}
\kappa:~ {\bf D}_q(\mathfrak{b})\stackrel{\sim}{\lra} \mathcal{O}_q(\mathscr{P}_{\G, \odot})^W.
\ee
\et
\begin{proof} Let us set 
\be
\label{asNOJ}
\mathbb{O}_i:= \mathbb{K}_{1,i}\mathbb{K}_{2,i^*}=\theta_{k_i}\theta_{k_{i^*}'}.
\ee
As proved in \cite{GS3}, the element $\mathbb{O}_i$ is an Casimir element. By Lemma \ref{lemma.case12}, we see that 
\[
\kappa({\bf K}_{i}^{-1}) = \mathbb{K}_{2,i^*} \mathbb{O}_i^{-1}, \qquad \kappa(\widetilde{\bf K}_{i}^{-1})= \mathbb{K}_{1,i} \mathbb{O}_i^{-1}
\]
are quantum theta functions contained in $\mathcal{O}_q(\mathscr{P}_{\G,\odot})^W$. Therefore the injective map $\kappa$ takes ${\bf D}_q(\mathfrak{b})$ into $\mathcal{O}_q(\mathscr{P}_{\G,\odot})^W$.

It remains to show that the map $\kappa$ is surjective. Let us define
\[
\mathbb{O}=\prod_{i=1}^r \mathbb{O}_i.
\]
Note that $\mathbb{O}$ is a Casimir element. Recall the filtration $\mathcal{F}^n$ of $\mathcal{O}_q(\mathscr{P}_{\G,\odot})^W$.
By looking at the leading term of $\mathbb{O}$, we see that for any $n\in \mathbb{Z}$, the power
\[
\mathbb{O}^{-n} \in \mathcal{F}^{n}.
\]
By Lemma \ref{tech.lemma1}, we see that
\[
\mathcal{F}^0\cdot \mathbb{O}^{-n} = \mathcal{F}^{n}.
\]

Now let us set 
\[
{\bf O}_i:= {\bf K}_{i}\widetilde{\bf K}_{i}, \qquad 
{\bf O}=\prod_{i=1}^r {\bf O}_{i}.
\]
We have $\kappa({\bf O})=\mathbb{O}$. By the definition of ${\bf D}_q(\mathfrak{b})$, we see that ${\bf D}_q(\mathfrak{b})$ is closed under the multiplication of ${\bf O}^{-n}$. In other words, we have
\[
\widetilde{\bf U}_q(\mathfrak{g})\cdot {\bf O}^{-n}\subset {\bf D}_q(\mathfrak{g}).
\]
Applying the map $\kappa$ on both sides, by Theorem \ref{iso,mai}, we get 
\[
\mathcal{F}^n \subset \kappa\left({\bf D}_q(\mathfrak{b})\right), \qquad \forall n\in \mathbb{Z}. 
\]
Therefore the map $\kappa$ is a surjection.
\end{proof}
 
\bd
Let $\mathcal{I}$ be an ideal of $\mathcal{O}_q(\mathscr{P}_{\G, \odot})^W$ generated by
\[
\mathbb{O}_i-1, \qquad \forall 1\leq i \leq r.
\]
Denote by $\mathcal{O}_q(\mathscr{P}_{\G, \odot})^W{\Big \slash} \mathcal{I}$.
\ed

Recall that ${\bf U}_q(\mathfrak{g})$ is a quotient algebra of ${\bf D}_q(\mathfrak{b})$ obtained by imposing the conditions $\mathbf{O}_i=1$ for $1\leq i\leq r$. The following result is  a direct consequence of Theorem \ref{isomo.duan.ao}.

\bt 
\label{isomoacs.quan}
The map $\kappa$ descends to an $\mathbb{L}-$algebra isomorphism 
\be 
\kappa:~
{\bf U}_q(\mathfrak{g}) \stackrel{\sim}{\lra}\mathcal{O}_q(\mathscr{P}_{\G, \odot})^W{\Big \slash} \mathcal{I}.
\ee
\et

The paper \cite{GS3} shows that $\kappa$ is a Hopf algebra homomorphism. Combining with Theorem \ref{isomoacs.quan}, we prove Theorem \ref{main1}.

\subsection{Cluster canonical basis of ${\bf U}_q(\mathfrak{g})$} Through the isomorphism $\kappa$, the set 
\be
\label{q.basis.z}
{\bf \Theta}:=\left\{\vartheta_l~\middle|~ l\in \mathscr{A}^+_{\G^L,\odot}(L)
\right\}
\ee
provides a $\mathbb{L}$-linear basis of  ${\bf D}_q(\mathfrak{b})$ with structural coefficients in $\mathbb{N}[q^{\frac{1}{2}}, q^{-\frac{1}{2}}]$.

\smallskip

Let ${C}:=\{c_1, \ldots, c_r\}$ be the tropical points that parametrize the $W$-inavariant Casimir elements $\mathbb{O}_1, \ldots, \mathbb{O}_r$ in \eqref{asNOJ}.  In other words, we have
\[
\mathbb{O}_i=\theta_{c_i}=\vartheta_{c_i}, \qquad \forall~ 1\leq i\leq r.
\]
Following \eqref{ascjna}, the set $C$ gives rise to a $\mathbb{Z}^r$ action on the set $\mathscr{A}_{\G^L, \odot}(\mathbb{Z}^t)$. 
Recall the set ${\rm Or}_{C}(\mathscr{A}_{\G^L, \odot}(\mathbb{Z}^t))$ 
of $\mathbb{Z}^r$-orbits of $\mathscr{A}_{\G^L, \odot}(\mathbb{Z}^t)$ under the above $\mathbb{Z}^r$ action.

\bl 
There is a natural bijection that identifies  ${\rm Or}_{C}(\mathscr{A}_{\G^L,\odot}(\mathbb{Z}^t))$ with the set 
\[
\U^L(\mathbb{Z}^t)\times X^*({\rm H}) \times \U^L(\mathbb{Z}^t).
\]
\el

\begin{proof}
By the bijection \eqref{acnoanbit}, we get a $\mathbb{Z}^r$ action on the set  
\be 
\label{adcniaj}
\U^L(\mathbb{Z}^t)\times X^*(\H) \times \U^L(\mathbb{Z}^t) \times X^*(\H).
\ee
Under this action, every  $a=(a_1, \ldots, a_r)\in \Z^r$ maps  an element $ (x, \lambda, y, \mu)$ in \eqref{adcniaj} to
\[
\left(x, \lambda+\lambda_a, y, \mu-w_0(\lambda_a)\right), \qquad \mbox{where } \lambda_a:=\sum_{i=1}^r a_i\alpha_i. 
\]
In particular, there is a unique $a$ such that $\mu-w_0(\lambda_a)=0$. Therefore, each $\mathbb{Z}^r$-orbit contains a unique representative such that the last coordinate $\mu =0$. 
\end{proof}

Following the same argument in the above proof and the bijection \eqref{trop.c.a}, we obtain a bijection of the sets
\be
\label{p,quant,a}
{\rm Or}_{C}(\mathscr{A}_{\G^L,\odot}^+(\mathbb{Z}^t))\stackrel{\sim}{=} \U^L_+(\mathbb{Z}^t)\times X^*(\H)\times \U^L_+(\mathbb{Z}^t). 
\ee
Meanwhile, for each $l\in {\rm Or}_{C}(\mathscr{A}_{\G^L,\odot}^+(\mathbb{Z}^t))$, by Lemma \ref{lemma.case12}, we get  
\[
\vartheta_{l}\cdot \vartheta_{c_i}=\vartheta_{l+c_i}
\]
It gives rise to a $\mathbb{Z}^r$ action on the basis \eqref{q.basis.z}.
After quotient out the ideal $\mathcal{I}$, the functions $\vartheta_l$ in the same $\mathbb{Z}^r$ orbit descends to one element in  ${\bf U}_q(\mathfrak{g})$. 
Putting them together, we obtain a basis $\overline{\bf \Theta}$ for ${\bf U}_q(\mathfrak{g})$, parametrized by the sets \eqref{p,quant,a}.

\smallskip 

\paragraph{{\bf Proof of Theorem \ref{main.theorem.baisi.p}}.}
We have already proved part d).

Part a) is a direct consequence of Lemma \ref{check2}.

The braid group ${\rm Br}_{\mathfrak{g}}$, the outer automorphism group ${\rm Out}(\G)$, and the Weyl group $W$ act on quantum cluster algebra $\mathcal{O}_q(\mathscr{P}_{\G, \odot})$ via quasi cluster automorphisms \cite[\S 13]{GS3}.  Following Theorem \ref{quantum.cluster.dual}, these three groups preserve the quantum theta basis $\Theta(\mathcal{O}_q(\mathscr{P}_{\G, \odot}))$ as a set. The actions of ${\rm Br}_{\mathfrak{g}}$ and ${\rm Out}(\G)$ are compatible with the Weyl group action. Therefore actions of  ${\rm Br}_{\mathfrak{g}}$ and ${\rm Out}(\G)$ preserves ${\bf \Theta}$ as a set. Furthermore, if two functions $\vartheta_{l_1}$ and $\vartheta_{l_2}$ are the same $\mathbb{Z}^r$-orbit, then their image under the action of any element in  ${\rm Br}_{\mathfrak{g}}$ or ${\rm Out}(\G)$ are also in the same $\mathbb{Z}^r$-orbit. Therefore, they preserve $\overline{\bf \Theta}$ as a set.
Through the isomorphism $\kappa$, the braid group action coincides with Lusztig's braid group action on ${\bf U}_q(\mathfrak{g})$, and the ${\rm Out}(\G)$ coincides with the Dynkin automorphisms. 
As a consequence, we prove part b) of Theorem  \ref{main.theorem.baisi.p}.

Part c) of Theorem \ref{main.theorem.baisi.p} follows from 4) of Theorem \ref{quantum.cluster.dual}. 
\bibliographystyle{amsalpha-a}

\bibliography{biblio}

\providecommand{\bysame}{\leavevmode\hbox to3em{\hrulefill}\thinspace}
\providecommand{\MR}{\relax\ifhmode\unskip\space\fi MR }
\providecommand{\MRhref}[2]{%
  \href{http://www.ams.org/mathscinet-getitem?mr=#1}{#2}
}
\providecommand{\href}[2]{#2}
\begin{thebibliography}{GHKK18}

\bibitem[BFZ05]{BFZ}
A.~Berenstein, S.~Fomin, and A.~Zelevinsky, \emph{Cluster algebras {III}: Upper
  bounds and double {B}ruhat cells}, Duke Math. J. \textbf{126} (2005), no.~1,
  1--52, \href{https://arxiv.org/abs/math/0305434}{arXiv:math/0305434}.

\bibitem[BG17]{BG}
A.~Berenstein and J.~Greenstein, \emph{{D}ouble {C}anonical {B}ases}, Adv.
  Math. \textbf{316} (2017), 54--111,
  \href{https://pages.uoregon.edu/arkadiy/doublecanbases.pdf}{arXiv:1411.1391}.

\bibitem[BZ05]{BZqua}
A.~Berenstein and A.~Zelevinsky, \emph{Quantum {C}luster {A}lgebras}, Adv.
  Math. \textbf{195} (2005), 405--455.

\bibitem[DM21]{DavMan}
B.~Davison and T.~Mandel, \emph{Strong positivity for quantum theta bases of
  quantum cluster algebras}, Invent. Math. (2021),
  \href{https://arxiv.org/abs/1910.12915}{arXiv:1910.12915}.

\bibitem[Dri85]{Dr}
V.~Drinfeld, \emph{Hopf algebras and quantum {Y}ang-{B}axter equation}, Soviet
  Math. Dokl. \textbf{32} (1985), 254--258.

\bibitem[FG06a]{FGamalgamation}
V.~Fock and A.~Goncharov, \emph{Cluster {X}-varieties, amalgamation and
  {P}oisson-{L}ie groups}, Algebraic Geometry and Number Theory, In honor of
  Vladimir Drinfeld's 50th birthday, Birkh{\"{a}}user Boston (2006), 27--68,
  \href{https://arxiv.org/abs/math/0508408}{arXiv:math/0508408}.

\bibitem[FG06b]{FGteich}
\bysame, \emph{Moduli spaces of local systems and higher {Teichm\"{u}ller}
  theory}, Publications Math\'{e}matiques de l'Institut des Hautes \'{E}tudes
  Scientifiques, Vol 103, Issue 1 (2006), 1--211,
  \href{https://arxiv.org/abs/math/0311149}{arXiv:math/0311149}.

\bibitem[FG09a]{FGensemble}
\bysame, \emph{Cluster ensembles, quantization and the dilogarithm}, Ann. Sci.
  \'{E}c. Norm. Sup\'{e}r. \textbf{42} (2009), no.~6, 865--930,
  \href{https://arxiv.org/abs/math/0311245}{arXiv:math/0311245}.

\bibitem[FG09b]{FGrep}
\bysame, \emph{The quantum dilogarithm and representations of quantum cluster
  varieties}, Invent. Math. \textbf{175} (2009), 223--286,
  \href{https://arxiv.org/abs/math/0702397}{arXiv:math/0702397}.

\bibitem[FP21]{FP}
C.~Fraser and P.~Pylyavskyy, \emph{Tensor diagrams and cluster combinatorics at
  punctures}, Preprint,
  \href{https://arxiv.org/abs/2107.13069}{arXiv:2107.13069}.

\bibitem[Fra16]{Fra}
C.~Fraser, \emph{Quasi-homomorphisms of cluster algebras}, Adv. in Appl. Math.
  \textbf{81} (2016), 40--77,
  \href{https://arxiv.org/abs/1509.05385}{arXiv:1509.05385}.

\bibitem[FZ02]{FZI}
S.~Fomin and A.~Zelevinsky, \emph{Cluster algebras {I}: Foundations}, J. Amer.
  Math. Soc. \textbf{15} (2002), 497--529,
  \href{https://arxiv.org/abs/math/0104151}{arXiv:math/0104151}.

\bibitem[GHKK18]{GHKK}
M.~Gross, P.~Hacking, S.~Keel, and M.~Kontsevich, \emph{Canonical bases for
  cluster algebras}, J. Amer. Math. Soc. \textbf{31} (2018), no.~2, 497--608,
  \href{https://arxiv.org/abs/1411.1394}{arXiv:1411.1394}.

\bibitem[GS15]{GS1}
A.~Goncharov and L.~Shen, \emph{Geometry of canonical bases and mirror
  symmetry}, Invent. Math. \textbf{202} (2015), no.~2, 487--633,
  \href{https://arxiv.org/pdf/1309.5922.pdf}{arXiv:1309.5922}.

\bibitem[GS18]{GS2}
\bysame, \emph{{D}onaldson-{T}homas transformations of moduli spaces of
  {G}-local systems}, Adv. Math. \textbf{327} (2018), 225--348,
  \href{https://arxiv.org/abs/1602.06479}{arXiv:1602.06479}.

\bibitem[GS19]{GS3}
\bysame, \emph{Quantum geometry of moduli spaces of local systems and
  representation theory}, Preprint (2019),
  \href{https://arxiv.org/abs/1904.10491}{arXiv:1904.10491}.

\bibitem[Ip18]{Ip}
I.~Ip, \emph{Cluster {R}ealization of $\mathcal{U}_q(\mathfrak{g})$ and
  {F}actorization of the universal $r$-matrix}, Selecta Math. (N.S.)
  \textbf{336} (2018), 409--454,
  \href{https://arxiv.org/abs/1612.05641}{arXiv:1612.05641}.

\bibitem[Jim85]{Ji}
M.~Jimbo, \emph{A q-difference analogue of {$U(\mathfrak{g})$} and the
  {Y}ang-{B}axter equation}, Lett. Math. Phys. \textbf{10} (1985), 63--69.

\bibitem[Kel13]{KelDT}
B.~Keller, \emph{Quiver mutation and combinatorial {DT}-invariants}, Discrete
  Mathematics and Theoretical Computer Science (2013),
  \href{https://arxiv.org/abs/1709.03143}{arXiv:1709.03143}.

\bibitem[Le19]{Le}
I.~Le, \emph{Cluster {S}tructures on higher {T}eichm{\"u}ller spaces for
  classical groups}, Forum of Mathematics, Sigma \textbf{7} (2019), e13,
  \href{https://arxiv.org/abs/1603.03523}{arXiv:1603.03523}.

\bibitem[Lus90]{LUS90}
G.~Lusztig, \emph{Canonical bases arising from quantized enveloping algebras},
  J. Amer. Math. Soc. \textbf{3} (1990), 447--498.

\bibitem[Lus94]{Lus}
\bysame, \emph{Introduction to quantum groups}, Birkh{\"{a}}user, 1994.

\bibitem[She14]{Sh}
L.~Shen, \emph{Stasheff polytopes and the coordinate ring of the cluster
  $\mathcal{X}$-variety of type ${A}_n$}, Selecta Math. (N.S.) \textbf{20}
  (2014), no.~3, 929--959,
  \href{https://arxiv.org/abs/1104.3528}{arXiv:1104.3528}.

\bibitem[She21]{Sh2}
\bysame, \emph{Duals of semisimple {P}oisson-{L}ie groups and cluster theory of
  moduli spaces of $\g$-local systems}, to appear in IMRN (2021),
  \href{https://arxiv.org/abs/2003.07901}{arXiv:2003.07901 }.

\bibitem[SS19]{SS}
G.~Schrader and A.~Shapiro, \emph{A cluster realization of
  $\mathcal{U}_q(sl_n)$ from quantum character varieties}, Invent. Math., no.3
  \textbf{216} (2019), 799--846,
  \href{https://arxiv.org/abs/1607.00271}{arXiv:1607.00271}.

\end{thebibliography}



\end{document}